\newtheorem{teo}{Theorem}
\newtheorem{lem}{Lemma}
\newtheorem{remark}{Remark}
 \title{{\bf  Schatten index of the operator  via the   real component of its inverse    }}
\author{Maksim \,V.~Kukushkin   \\ \\
  % Note on the connection between the Schatten indexes of the operator and its real component  Institutes
 \small  \textit{HSE University, 101000,  Moscow, Russia}\\
 \textit{\small\textit{kukushkinmv@rambler.ru}} }
\date{}
\begin{document}

\maketitle

\begin{abstract} In this paper we study spectral properties  of   non-selfadjoint operators with the discrete spectrum.  The main challenge is to represent a complete description  of belonging to the Schatten  class  through  the properties  of the Hermitian real component.  The method of estimating the singular values is elaborated by virtue of the established asymptotic formulas. The latter fundamental result is advantageous since many theoretical statements based upon it,  one of them is a concept on the root vectors series expansion which leads to    a wide spectrum of applications in the theory of evolution equations. In this regard the evolution equations of fractional order with the operator in the term not containing the time variable are involved.   The concrete well-known operators are considered and the advantage of the represented method is convexly shown.

\end{abstract}
\begin{small}\textbf{Keywords:}
 Strictly accretive operator;  Abel-Lidskii basis property;   Schatten-von Neumann  class; convergence exponent; counting function.   \\\\
{\textbf{MSC} 47B28; 47A10; 47B12; 47B10;  34K30; 58D25.}
\end{small}

\section{Introduction}
The idea to write this paper origins from the concept of decomposition of an element of the abstract Hilbert space on the root vectors series. The latter concept lies in the framework of abstract  functional  analysis and its appearance arises from  elaboration of  methods of solving  evolution equations investigated in the recent century by Lidskii V.B., Matsaev V.I., Agaranovich M.S. and others.   In its simple reduced form, applicably to selfadjoint operators   the concept admits the   interpretation through   the  well-known fact that the   eigenvectors   of the compact selfadjoint operator form a basis in the closure of its range. The question what happens in the case when the operator is non-selfadjoint is rather complicated and  deserves to be considered as a separate part of the spectral theory.

 We should make a brief digression and explain that  a relevance appears just   in the case when   a senior term of a considered  operator is not selfadjoint  for there is a number of papers \cite{firstab_lit:1Katsnelson},\cite{firstab_lit:1Krein},\cite{firstab_lit:Markus Matsaev},\cite{firstab_lit:2Markus},\cite{firstab_lit:Motovilov},
\cite{firstab_lit:Shkalikov A.} devoted to the perturbed selfadjoint operators.  The fact is that most of them deal with a decomposition of the  operator  on a sum,  where the senior term  must be either a selfadjoint or normal operator. In other cases, the  methods of the papers
     \cite{kukushkin2019}, \cite{firstab_lit(arXiv non-self)kukushkin2018} become relevant   and allow us  to study spectral properties of   operators  whether we have the mentioned above  representation or not, moreover   they have a natural mathematical origin that appears brightly  while we are considering abstract constructions expressed  in terms of the semigroup theory   \cite{kukushkin2021a}.

  Generally, the aim of the mentioned  part of the spectral theory   are   propositions on the convergence of the root vector series in one or another sense to an element belonging to the closure of the operator range, we   mean   Bari, Riesz,  Abel-Lidskii senses of the series convergence  \cite{firstab_lit:1Gohberg1965}.
  The main condition in terms of which the propositions are mostly described is the asyptotics of the operator  singular numbers, here we should note that  originally  it is  formulated in terms of   the operator beloning to the Schatten class.  However, Agaranovich M.S. made an attempt to express the sufficient conditions of the root vector series basis property, in the mentioned above generalized sense, through the asymptotics of the  eigenvalues of the real component \cite{firstab_lit:2Agranovich1994}. The paper by    Markus A.S.,  Matsaev V.I. \cite{firstab_lit:Markus Matsaev}  can be also considered within the scope  since it establishes the relationship between  the asymptotics  of the operator eigenvalues absolute value  and eigenvalues of the  real component.

  Thus, the interest how to express root vectors series decomposition theorems through the asymptotics of the real component eigenvalues  arose  previously    what along the obvious technical advantage in finding the asymptotics  create a prerequisite to investigate the matter properly.  We should point out that under the desired relationship between asymptotics we are able to reformulate theorems on the root vectors  series expansion  in terms of the assumptions related to the real component of the operator. The latter idea is relevant  since in many cases the calculation of the real component eigenvalues  asymptotics   simpler than direct calculation of the singular numbers asymptotics.

 If we make a comparison analysis between the methods of root vectors decomposition by Lidskii V.B. and Agaranovich M.S. we will see that the first one formulated the conditions in terms of the singular values but the second one did  in terms of the real component eigenvalues. In this regard we will show that the real component eigenvalue  asymptotics stronger than the one of the singular numbers, however  Agaranovich M.S. imposed the additional condition - spectrum belongs to  the domain of the parabolic type. From the latter point of view the results by Lidskii V.B. more advantageous since   the convergence in the Abel-Lidskii sense  was established for an operator class wider than the class of sectorial operators. Apparently,   a reasonable question that may   appear  is about minimal conditions that guaranty the desired result what in particular is considered in this paper.

  Here, we obviously can extend  the  results devoted to operators with the discrete spectra to operators with the compact resolvent, for they can be easily reformulated from one realm to another. In this regard, we should make  warning that the latter fact does not hold for real components since the real component of the inverse  operator does not coincide with the inverse of the operator real component. However, such a complication was diminished due to the results
   \cite{firstab_lit(arXiv non-self)kukushkin2018},\cite{kukushkin2019}, where the asymptotic equivalence between the eigenvalues of the mentioned operators was  easteblished.

 A couple of words on the applied relevance of the issue. The  abstract approach to the Cauchy problem for  the fractional evolution equation is classic one \cite{firstab_lit:Bazhl},\cite{Ph. Cl}. In its framework,  the application  of results connected with the basis property  covers  many   problems of the theory of evolution  equations \cite{firstab_lit:1Lidskii}, \cite{kukushkin2019}, \cite{kukushkin2021a}, \cite{firstab_lit:1kukushkin2021}, \cite{firstab_lit:2kukushkin2022}, \cite{firstab_lit(axi2022)}. In its general statement the  problem appeals to many applied ones,  we can   produce a number  of papers dealing  with differential equations which can be studied by the  abstract methods \cite{L. Mor}, \cite{firstab_lit:Mainardi F.}, \cite{firstab_lit:Mamchuev2017a}, \cite{firstab_lit:Mamchuev2017}, \cite{firstab_lit:Pskhu}, \cite{firstab_lit:Wyss}. Apparently, the  main advantage of this paper is a a technique that allows to implement the verification of the abstract conditions of the exitance and uniqueness theorem  for the concrete evolution equations. Thus, we can  claim that the offered  approach is undoubtedly novel from the abstract theory point of view  and relevant from the applied one.

\section{Preliminaries}

Let    $ C,C_{i} ,\;i\in \mathbb{N}_{0}$ be   real constants. We   assume   that  a  value of $C$ is positive and   can be different in   various formulas  but   values of $C_{i} $ are  certain. Denote by $ \mathrm{int} \,M,\;\mathrm{Fr}\,M$ the interior and the set of boundary points of the set $M$ respectively.   Everywhere further, if the contrary is not stated, we consider   linear    densely defined operators acting on a separable complex  Hilbert space $\mathfrak{H}$. Denote by $ \mathcal{B} (\mathfrak{H})$    the set of linear bounded operators   on    $\mathfrak{H}.$  Denote by
    $\tilde{L}$   the  closure of an  operator $L.$ We establish the following agreement on using  symbols $\tilde{L}^{i}:= (\tilde{L})^{i},$ where $i$ is an arbitrary symbol.  Denote by    $    \mathrm{D}   (L),\,   \mathrm{R}   (L),\,\mathrm{N}(L)$      the  {\it domain of definition}, the {\it range},  and the {\it kernel} or {\it null space}  of an  operator $L$ respectively. The deficiency (codimension) of $\mathrm{R}(L),$ dimension of $\mathrm{N}(L)$   are denoted by $\mathrm{def}\, L,\;\mathrm{nul}\,L$ respectively.  Assume that $L$ is a closed   operator acting on $\mathfrak{H},\,\mathrm{N}(L)=0,$  let us define a Hilbert space
$
 \mathfrak{H}_{L}:= \big \{f,g\in \mathrm{D}(L),\,(f,g)_{ \mathfrak{H}_{L}}=(Lf,Lg)_{\mathfrak{H} } \big\}.
$
Consider a pair of complex Hilbert spaces $\mathfrak{H},\mathfrak{H}_{+},$ the notation
$
\mathfrak{H}_{+}\subset\subset\mathfrak{ H}
$
   means that $\mathfrak{H}_{+}$ is dense in $\mathfrak{H}$ as a set of    elements and we have a bounded embedding provided by the inequality
$$
\|f\|_{\mathfrak{H}}\leq C_{0}\|f\|_{\mathfrak{H}_{+}},\,C_{0}>0,\;f\in \mathfrak{H}_{+},
$$
moreover   any  bounded  set with respect to the norm $\mathfrak{H}_{+}$ is compact with respect to the norm $\mathfrak{H}.$
  Let $L$ be a closed operator, for any closable operator $S$ such that
$\tilde{S} = L,$ its domain $\mathrm{D} (S)$ will be called a core of $L.$ Denote by $\mathrm{D}_{0}(L)$ a core of a closeable operator $L.$ Let    $\mathrm{P}(L)$ be  the resolvent set of an operator $L$ and
     $ R_{L}(\zeta),\,\zeta\in \mathrm{P}(L),\,[R_{L} :=R_{L}(0)]$ denotes      the resolvent of an  operator $L.$ Denote by $\lambda_{i}(L),\,i\in \mathbb{N} $ the eigenvalues of an operator $L.$
 Suppose $L$ is  a compact operator and  $N:=(L^{\ast}L)^{1/2},\,r(N):={\rm dim}\,  \mathrm{R}  (N);$ then   the eigenvalues of the operator $N$ are called   the {\it singular  numbers} ({\it s-numbers}) of the operator $L$ and are denoted by $s_{i}(L),\,i=1,\,2,...\,,r(N).$ If $r(N)<\infty,$ then we put by definition     $s_{i}=0,\,i=r(N)+1,2,...\,.$
 According  to the terminology of the monograph   \cite{firstab_lit:1Gohberg1965}  the  dimension  of the  root vectors subspace  corresponding  to a certain  eigenvalue $\lambda_{k}$  is called  the {\it algebraic multiplicity} of the eigenvalue $\lambda_{k}.$
Let  $\nu(L)$ denotes   the sum of all algebraic multiplicities of an  operator $L.$ Denote by $n(r)$ a function equals to a number of the elements of the sequence $\{a_{n}\}_{1}^{\infty},\,|a_{n}|\uparrow\infty$ within the circle $|z|<r.$ Let $A$ be a compact operator, denote by $n_{A}(r)$   {\it counting function}   a function $n(r)$ corresponding to the sequence  $\{s^{-1}_{i}(A)\}_{1}^{\infty}.$ Let  $\mathfrak{S}_{p}(\mathfrak{H}),\, 0< p<\infty $ be       a Schatten-von Neumann    class and      $\mathfrak{S}_{\infty}(\mathfrak{H})$ be the set of compact operators.
    Suppose  $L$ is  an   operator with a compact resolvent and
$s_{n}(R_{L})\leq   C \,n^{-\mu},\,n\in \mathbb{N},\,0\leq\mu< \infty;$ then
 we
 denote by  $\mu(L) $   order of the     operator $L$ in accordance with  the definition given in the paper  \cite{firstab_lit:Shkalikov A.}.
 Denote by  $ \mathfrak{Re} L  := \left(L+L^{*}\right)/2,\, \mathfrak{Im} L  := \left(L-L^{*}\right)/2 i$
  the  real  and   imaginary components    of an  operator $L$  respectively.
In accordance with  the terminology of the monograph  \cite{firstab_lit:kato1980} the set $\Theta(L):=\{z\in \mathbb{C}: z=(Lf,f)_{\mathfrak{H}},\,f\in  \mathrm{D} (L),\,\|f\|_{\mathfrak{H}}=1\}$ is called the  {\it numerical range}  of an   operator $L.$
  An  operator $L$ is called    {\it sectorial}    if its  numerical range   belongs to a  closed
sector     $\mathfrak{ L}_{\iota}(\theta):=\{\zeta:\,|\arg(\zeta-\iota)|\leq\theta<\pi/2\} ,$ where      $\iota$ is the vertex   and  $ \theta$ is the semi-angle of the sector   $\mathfrak{ L}_{\iota}(\theta).$ If we want to stress the  correspondence  between $\iota$ and $\theta,$  then   we will write $\theta_{\iota}.$
 An operator $L$ is called  {\it bounded from below}   if the following relation  holds  $\mathrm{Re}(Lf,f)_{\mathfrak{H}}\geq \gamma_{L}\|f\|^{2}_{\mathfrak{H}},\,f\in  \mathrm{D} (L),\,\gamma_{L}\in \mathbb{R},$  where $\gamma_{L}$ is called a lower bound of $L.$ An operator $L$ is called  {\it   accretive}   if  $\gamma_{L}=0.$
 An operator $L$ is called  {\it strictly  accretive}   if  $\gamma_{L}>0.$      An  operator $L$ is called    {\it m-accretive}     if the next relation  holds $(A+\zeta)^{-1}\in \mathcal{B}(\mathfrak{H}),\,\|(A+\zeta)^{-1}\| \leq   (\mathrm{Re}\zeta)^{-1},\,\mathrm{Re}\zeta>0. $
An operator $L$ is called    {\it m-sectorial}   if $L$ is   sectorial    and $L+ \beta$ is m-accretive   for some constant $\beta.$   An operator $L$ is called     {\it symmetric}     if one is densely defined and the following  equality  holds $(Lf,g)_{\mathfrak{H}}=(f,Lg)_{\mathfrak{H}},\,f,g\in   \mathrm{D}  (L).$

Everywhere further,   unless  otherwise  stated,  we   use  notations of the papers   \cite{firstab_lit:1Gohberg1965},  \cite{firstab_lit:kato1980},  \cite{firstab_lit:kipriyanov1960}, \cite{firstab_lit:1kipriyanov1960},
\cite{firstab_lit:samko1987}.\\

\noindent{\bf  Sectorial sesquilinear forms and the Hermitian  components}\\

Consider the Hermitian    components of the operator (not necessarily bounded)
$$
\mathfrak{Re} L:=\frac{L+L^{\ast}}{2},\;\;\mathfrak{Im} L:=\frac{L-L^{\ast}}{2i},
$$
it is clear that in the case when the operator $L$ is unbounded but densely defined we need agreement of the domain of definitions of the operator and its adjoint, since in other case the real component may be not densely defined. However, the latter claim required concrete examples.

  Consider a   sesquilinear form   $ t  [\cdot,\cdot]$ (see \cite{firstab_lit:kato1980} )
defined on a linear manifold  of the Hilbert space $\mathfrak{H}.$   Denote by $   t  [\cdot ]$ the  quadratic form corresponding to the sesquilinear form $ t  [\cdot,\cdot].$
Let
$$
\mathfrak{h}=( t + t ^{\ast})/2,\, \mathfrak{k}   =( t - t ^{\ast})/2i
$$
   be a   real  and    imaginary component     of the   form $  t $ respectively, where $ t^{\ast}[u,v]=t \overline{[v,u]},\;\mathrm{D}(t ^{\ast})=\mathrm{D}(t).$ In accordance with the definitions, we have
    $
 \mathfrak{h}[\cdot]=\mathrm{Re}\,t[\cdot],\,  \mathfrak{k}[\cdot]=\mathrm{Im}\,t[\cdot].$ Denote by $\tilde{t}$ the  closure   of a   form $t.$  The range of a quadratic form
  $ t [f],\,f\in \mathrm{D}(t),\,\|f\|_{\mathfrak{H}}=1$ is called    {\it range} of the sesquilinear form  $t $ and is denoted by $\Theta(t).$
 A  form $t$ is called    {\it sectorial}    if  its    range  belongs to   a sector  having  a vertex $\iota$  situated at the real axis and a semi-angle $0\leq\theta_{\iota}<\pi/2.$   Suppose   $t$ is a closed sectorial form; then  a linear  manifold  $\mathrm{D}_{0}(t) \subset\mathrm{D} (t)$   is
called    {\it core}  of $t,$ if the restriction   of $t$ to   $\mathrm{D}_{0}(t)$ has the   closure
$t$ (see\cite[p.166]{firstab_lit:kato1980}).

Suppose  $L$ is a sectorial densely defined operator and $t[u,v]:=(Lu,v)_{\mathfrak{H}},\,\mathrm{D}(t)=\mathrm{D}(L);$  then
 due to   Theorem 1.27 \cite[p.318]{firstab_lit:kato1980}   the corresponding  form $t$ is   closable, due to
   Theorem 2.7 \cite[p.323]{firstab_lit:kato1980} there exists   a unique m-sectorial operator   $T_{\tilde{t}}$   associated  with  the form $\tilde{t}.$  In accordance with the  definition \cite[p.325]{firstab_lit:kato1980} the    operator $T_{\tilde{t}}$ is called     a {\it Friedrichs extension} of the operator $L.$

Due to Theorem 2.7 \cite[p.323]{firstab_lit:kato1980}  there exist unique    m-sectorial operators  $T_{t},T_{ \mathfrak{h}} $  associated  with   the  closed sectorial   forms $t,  \mathfrak{h}$   respectively.   The operator  $T_{  \mathfrak{h}} $ is called  a {\it real part} of the operator $T_{t}$ and is denoted in accordance with the original definition \cite{firstab_lit:kato1980} by  $\mathrm{Re}\, T_{t}.$

Here, we should stress that the construction of the real part in some cases  is obviously coincided with the one of the real component, however the latter does not require the agreement between the domain of definitions mentioned above. The conditions represented bellow reflect the nature of the uniformly elliptic operators being the direct generalization of the latter considered in the context of the Sobolev spaces.\\

 \noindent  $ (\mathrm{H}1) $ There  exists a Hilbert space $\mathfrak{H}_{+}\subset\subset\mathfrak{ H}$ and a linear manifold $\mathfrak{M}$ that is  dense in  $\mathfrak{H}_{+}.$ The operator $L$ is defined on $\mathfrak{M}.$    \\

 \noindent  $( \mathrm{H2} )  \,\left|(Lf,g)_{\mathfrak{H}}\right|\! \leq \! C_{1}\|f\|_{\mathfrak{H}_{+}}\|g\|_{\mathfrak{H}_{+}},\,
      \, \mathrm{Re}(Lf,f)_{\mathfrak{H}}\!\geq\! C_{2}\|f\|^{2}_{\mathfrak{H}_{+}} ,\,f,g\in  \mathfrak{M},\; C_{1},C_{2}>0.
$
\\

Consider  a condition  $\mathfrak{M}\subset \mathrm{D}( W ^{\ast}),$ in this case the real Hermitian component  $\mathcal{H}:=\mathfrak{Re }\,W$ of the operator is defined on $\mathfrak{M},$ the fact is that $\tilde{\mathcal{H}}$ is selfadjoint,    bounded  from bellow (see Lemma  3 \cite{firstab_lit(arXiv non-self)kukushkin2018}).  Hence a corresponding sesquilinear  form (denote this form by $h$) is symmetric and  bounded from bellow also (see Theorem 2.6 \cite[p.323]{firstab_lit:kato1980}). The conditions H1,H2 allow us to claim that   the form $t$ corresponding to the operator $W$ is a closed sectorial form, consider the corresponding form $\mathfrak{h}$.  It can be easily shown  that $h\subset   \mathfrak{h},$  but using this fact    we cannot claim in general that $\tilde{\mathcal{H}}\subset H,$ where $H:=\mathrm{Re}W$ (see \cite[p.330]{firstab_lit:kato1980} ). We just have an inclusion   $\tilde{\mathcal{H}}^{1/2}\subset H^{1/2}$     (see \cite[p.332]{firstab_lit:kato1980}). Note that the fact $\tilde{\mathcal{H}}\subset H$ follows from a condition $ \mathrm{D}_{0}(\mathfrak{h})\subset \mathrm{D}(h) $ (see Corollary 2.4 \cite[p.323]{firstab_lit:kato1980}).
 However, it is proved (see proof of Theorem  4 \cite{firstab_lit(arXiv non-self)kukushkin2018}) that relation H2 guaranties that $\tilde{\mathcal{H}}=H.$ Note that the last relation is very useful in applications, since in most concrete cases we can find a concrete form of the operator $\mathcal{H}.$

\vspace{0.5cm}
\noindent{\bf Previously obtained results}\\

Here, we represent  previously obtained results that will be undergone to  a thorough study since our principal challenge is to obtain an accurate description of the  Schatten-von Neumann class index of a non-selfadjoint operator.

Further, we consider the Theorem 1 \cite{kukushkin2021a}  statements separately under assumptions $\mathrm{H}1,\mathrm{H}2.$ Let $W$ be a restriction of the operator  $L$ on the set $\mathfrak{M},$ without loss of generality of reasonings, we assume that $W$ is closed since the conditions  H1, H2  guaranty  that it is closeable, more detailed information in this regard is given in  \cite{kukushkin2021a}.

 We  have the following classification in terms of the operator order $\mu,$ where it is defined as follows $ \lambda_{n}(R_{H})=O  (n^{-\mu}),\,n\rightarrow\infty.$\\

\noindent $({\bf A})$  The following Schatten classification holds
 $$\;
 R_{ \tilde{W} }\in  \mathfrak{S}_{p},\,\inf p\leq2/\mu,\,\mu\leq1,\; R_{ W }\in  \mathfrak{S}_{1},\,\mu>1.
 $$
 Moreover, under assumptions
$\lambda_{n}(R_{H})\geq  C \,n^{-\mu} ,\,0\leq\mu<\infty,$   the following implication holds $R_{  W}\in\mathfrak{S}_{p},\,p\in [1,\infty),\Rightarrow   \mu>1/p.$    \\

 Observe  that the given above classification is far from the exact description of the Schatten-von Neumann class index. However, having analyzed  the above   implications,  we can say that   it makes a prerequisite to establish  a hypotheses $ R_{ W }\in  \mathfrak{S}_{p},\,\inf p=1/\mu. $
The following narrative is devoted to its verification.

 Let  us undergone the technical tools involved in the proof of the statement to   the thorough analysis in order to absorb and contemplate them.
Consider the statement, if    $\mu\leq1,$ then   $\; R_{  W  }\in  \mathfrak{S}_{p},\,\inf p\leq2/\mu.$ The main result on which it is based is in the asymptotic equivalence between the the inverse of the real component and the real component of the resolvent, the latter due to the technical tool makes the result, i.e.
$$
(|R_{ W  }|^{2} f,f)_{\mathfrak{H}}=\|R_{  W }f\|^{2}_{\mathfrak{H}}\leq C \cdot{\rm Re}(R_{ W }f,f)_{\mathfrak{H}}= C\cdot \left( \mathfrak{Re}R_{ W }f,f\right)_{\mathfrak{H}}.
$$

Consider the statement, if  $\lambda_{n}(R_{H})\geq  C \,n^{-\mu},\,0\leq\mu<\infty,$    then  the following implication holds $R_{  W}\in\mathfrak{S}_{p},\,p\in [1,\infty),\Rightarrow \mu>1/p.$ The main results that guaranty the fulfilment of the latter are  inequality  (7.9) \cite[p.123]{firstab_lit:1Gohberg1965}, Theorem 3.5 \cite{kukushkin2021a}, in accordance with which,   we get
$$
\sum\limits_{i=1}^{\infty}|s_{i} (R_{W} )|^{p}\geq\sum\limits_{i=1}^{\infty}| (R_{W}\varphi_{i},\varphi_{i})_{\mathfrak{H}}|^{p}\geq\sum\limits_{i=1}^{\infty}|{\rm Re}(R_{W}\varphi_{i},\varphi_{i})_{\mathfrak{H}}|^{p}=
$$
$$
=\sum\limits_{i=1}^{\infty}| \left(\mathfrak{Re}R_{ W } \varphi_{i},\varphi_{i}\right)_{\mathfrak{H}}|^{p}=\sum\limits_{i=1}^{\infty}
|\lambda_{i}\left(\mathfrak{Re}R_{ W }\right)|^{p}\geq C   \sum\limits_{i=1}^{\infty}  i^{- \mu p },\,p\geq1.
$$

Bellow, we represent the second statement of Theorem 1 \cite{kukushkin2021a}, where the peculiar result related to the asymptotics of the absolute value of the eigenvalue is given\\

\noindent$({\bf B})$     In the case  $\nu(R_{ W })=\infty,\,\mu \neq0,$  the following relation  holds
$$
|\lambda_{n}(R_{W})|=  o\left(n^{-\tau}    \right)\!,\,n\rightarrow \infty,\;0<\tau<\mu.
$$
It is based on the Theorem 6.1 \cite[p.81]{firstab_lit:1Gohberg1965},  in accordance with which, we have
 \begin{equation}\label{3.31}
\sum\limits_{m=1}^{k}|{\rm Im}\,\lambda_{m}(L)|^{p}\leq \sum\limits_{m=1}^{k}
|\lambda_{m} (\mathfrak{Im} L   )|^{p},\;\left(k=1,\,2,...,\,\nu_{\,\mathfrak{I}}(L)\right),\,1\leq p<\infty,
\end{equation}
where    $ \nu_{\,\mathfrak{I}}(L) \leq \infty $ is   the sum of   all  algebraic multiplicities corresponding to   the not real  eigenvalues   of the operator $L$ (see  \cite[p.79]{firstab_lit:1Gohberg1965}).

Note that the statement (B)  will allow us to arrange brackets in the series that converges in the Abell-Lidskii sense  what would be an advantageous achievement in the later constructed theory.  However, it may be interesting if we do not have the  exact index of the Schatten class  for in this case, we obtain the obvious
$$
R_{W}\in \mathfrak{S}_{p},\Rightarrow s_{n}=o(n^{-1/p}),
$$
hence in accordance with the connection of the asymptotics (see Chapter II, \S3 \cite{firstab_lit:1Gohberg1965} ), we get $|\lambda_{n}(R_{W})|=o\left(n^{-1/p}    \right)$ that is the same if we have $p>1/\mu.$ Thus, along the mentioned above implication $R_{  W}\in\mathfrak{S}_{p},\,p\in [1,\infty),\Rightarrow p>1/\mu$ it makes the prerequisite to observe  the hypotheses $\inf p = 1/\mu. $

Apparently, the used technicalities appeal to the so-called non-direct estimates for singular numbers realized due to the series estimates. As we will see further, the main advantage of the series estimation is the absence of the conditions imposed on the type of the asymptotics, it may be not one of the power type. However, we will show that under the restriction imposed on the type of the asymptotics, assuming that one is of the power type, we can obtain direct estimates for singular numbers. In the reminder, let us note that classes of differential operators have the asymptotics of the power type what make the issue quite relevant.

\section{Main results}
\vspace{0.5cm}
\noindent{\bf 1. The main refinement of the result  A }\\

The reasonings produced bellow appeals to a compact operator $B$ what represents a most general case in the framework of the decomposition on the root vectors theory, however to obtain more peculiar results we are compelled to deploy some restricting conditions. In this regard we involve hypotheses H1,H2 if it is necessary.
The result represented bellow gives us  the upper estimate for the singular numbers it is based on the result  by  Ky Fan \cite{firstab_lit:Fan} which  can be found as a corollary of the well-known Allakhverdiyev theorem,  see   Corollary  2.2 \cite{firstab_lit:1Gohberg1965}.

\begin{lem}\label{L1} Assume that  $B$ is a compact sectorial operator with the vertex situated at the point zero, then
$$
s_{2m-1}(B)\leq \sqrt{2}\,\sec  \theta \cdot \lambda_{m} (\mathfrak{Re}  B),\;\;s_{2m}(B)\leq \sqrt{2}\,\sec  \theta \cdot  \lambda_{m} (\mathfrak{Re}  B), \;\;m =1,2,...\,  .
$$
\end{lem}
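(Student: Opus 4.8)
The plan is to estimate $s_{2m-1}(B)$ by compressing $B$ onto the subspace where $\mathfrak{Re}\,B$ is small and then invoking the additive Ky Fan inequality. Since $B$ is sectorial with vertex at the origin, its real component $P:=\mathfrak{Re}\,B$ is compact, selfadjoint and \emph{nonnegative}; let $\{e_j\}$ be orthonormal eigenvectors with $Pe_j=\lambda_j(P)e_j$, $\lambda_1(P)\ge\lambda_2(P)\ge\dots\ge0$. Denote by $Q$ the orthogonal projection onto $\mathrm{span}\{e_1,\dots,e_{m-1}\}$ and set $R:=I-Q$, so that $(Pf,f)_{\mathfrak H}\le\lambda_m(P)\|f\|^2_{\mathfrak H}$ for every $f\in R\mathfrak H$.

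First I would write $B=RBR+K$ with $K:=QB+BQ-QBQ$. The range of $K$ lies in $Q\mathfrak H+B(Q\mathfrak H)$, which is at most $(2m-2)$-dimensional, so $K$ has rank at most $2m-2$ and hence $s_{2m-1}(K)=0$. Applying the Ky Fan inequality $s_{i+j-1}(X+Y)\le s_i(X)+s_j(Y)$ (Corollary 2.2 \cite{firstab_lit:1Gohberg1965}) with $X=RBR$, $Y=K$, $i=1$, $j=2m-1$ gives $s_{2m-1}(B)\le s_1(RBR)+s_{2m-1}(K)=\|RBR\|$. Thus it remains to bound the operator norm of the compression $RBR$.

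The decisive step is a norm bound for this compression. Since $R$ is an orthogonal projection, $\mathfrak{Re}(RBR)=R(\mathfrak{Re}\,B)R=RPR$ and $\|RPR\|=\sup_{\|f\|=1}(PRf,Rf)_{\mathfrak H}\le\lambda_m(P)$ by the choice of $R$. Moreover $RBR$ inherits sectoriality: for $f\in R\mathfrak H$ one has $(RBRf,f)_{\mathfrak H}=(Bf,f)_{\mathfrak H}\in\Theta(B)$, so its numerical range still lies in the sector of semi-angle $\theta$. For \emph{any} operator $A$ that is sectorial with vertex zero and semi-angle $\theta$ one has $|(\mathfrak{Im}\,A\,f,f)|\le\tan\theta\,(\mathfrak{Re}\,A\,f,f)$, whence $\pm\,\mathfrak{Im}\,A\le\tan\theta\,\mathfrak{Re}\,A\le\tan\theta\,\|\mathfrak{Re}\,A\|\cdot I$ and therefore $\|\mathfrak{Im}\,A\|\le\tan\theta\,\|\mathfrak{Re}\,A\|$. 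Consequently
$$\|A\|\le\|\mathfrak{Re}\,A\|+\|\mathfrak{Im}\,A\|\le(1+\tan\theta)\,\|\mathfrak{Re}\,A\|\le\sqrt2\,\sec\theta\,\|\mathfrak{Re}\,A\|,$$
the last inequality being $\cos\theta+\sin\theta\le\sqrt2$. Applying this to $A=RBR$ yields $\|RBR\|\le\sqrt2\,\sec\theta\,\lambda_m(P)$, and combined with the previous step $s_{2m-1}(B)\le\sqrt2\,\sec\theta\,\lambda_m(\mathfrak{Re}\,B)$; the estimate for $s_{2m}(B)$ then follows at once from the monotonicity $s_{2m}(B)\le s_{2m-1}(B)$.

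The step I expect to be the main obstacle — and the reason for compressing rather than splitting $B=\mathfrak{Re}\,B+i\,\mathfrak{Im}\,B$ directly — is that the tempting estimate $s_m(\mathfrak{Im}\,B)\le\tan\theta\,\lambda_m(\mathfrak{Re}\,B)$ is in fact \emph{false}: the quadratic-form inequality $\pm\,\mathfrak{Im}\,B\le\tan\theta\,\mathfrak{Re}\,B$ does not pass to the individual singular numbers, only to the operator norm. Passing to the compression $RBR$ is precisely what permits use of the robust norm inequality $\|\mathfrak{Im}(RBR)\|\le\tan\theta\,\|\mathfrak{Re}(RBR)\|$ instead of the false singular-number one, while the rank-$(2m-2)$ correction $K$ supplies exactly the index shift that produces the subscript $2m-1$.
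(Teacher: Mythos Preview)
Your proof is correct and follows a genuinely different route from the paper's. The paper proceeds via the quadratic identity
\[
\mathfrak{Re}^{2}B+\mathfrak{Im}^{2}B=\tfrac{1}{2}(B^{\ast}B+BB^{\ast}),
\]
applies Ky Fan to the \emph{sum of squares}, and then invokes the pointwise eigenvalue inequality $|\lambda_{m}(\mathfrak{Im}\,B)|\le\tan\theta\,\lambda_{m}(\mathfrak{Re}\,B)$ (justified there as a ``corollary of the minimax principle'') to collapse $\lambda_{m}(\mathfrak{Re}^{2}B)+\lambda_{m}(\mathfrak{Im}^{2}B)$ into $\sec^{2}\theta\,\lambda_{m}^{2}(\mathfrak{Re}\,B)$. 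Your argument instead compresses $B$ to the subspace orthogonal to the first $m-1$ eigenvectors of $\mathfrak{Re}\,B$, bounds the rank of the remainder by $2m-2$, and uses only the \emph{norm} inequality $\|\mathfrak{Im}\,A\|\le\tan\theta\,\|\mathfrak{Re}\,A\|$ on the compression, which is unimpeachable.

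Your caution is well placed: the step $s_{m}(\mathfrak{Im}\,B)\le\tan\theta\,\lambda_{m}(\mathfrak{Re}\,B)$, which the paper's route needs (since $\lambda_{m}(\mathfrak{Im}^{2}B)=s_{m}^{2}(\mathfrak{Im}\,B)$), does \emph{not} follow from $-\tan\theta\,\mathfrak{Re}\,B\le\mathfrak{Im}\,B\le\tan\theta\,\mathfrak{Re}\,B$ by minimax alone. A $3\times3$ example with $\mathfrak{Re}\,B=\mathrm{diag}(1,\epsilon,\epsilon)$ and $\mathfrak{Im}\,B$ the off-diagonal matrix with entries $\sqrt{\epsilon}$ in the $(1,2)$ and $(2,1)$ positions satisfies the form inequality with $\tan\theta=1$, yet $s_{2}(\mathfrak{Im}\,B)=\sqrt{\epsilon}>\epsilon=\lambda_{2}(\mathfrak{Re}\,B)$. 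Thus your compression argument not only gives an alternative proof with the same constant $\sqrt{2}\sec\theta$, it closes a gap in the paper's reasoning at precisely the point you anticipated.
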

\begin{proof}
 Consider the Hermitian components
$$
\mathfrak{Re} B:=\frac{B+B^{\ast}}{2},\;\;\mathfrak{Im} B:=\frac{B-B^{\ast}}{2i},
$$
it is clear that they are compact selfadjoint operators, since $B$ is compact and due to the technicalities of the given algebraic constructions.
Note that the  following relation can be established by direct calculation
$$
\mathfrak{Re}^{2} \!B+ \mathfrak{Im}^{2} \!B=\frac{B^{\ast}B+BB^{\ast}}{2},
$$
from what follows the inequality
\begin{equation}\label{eq4a}
 \frac{1}{2} \cdot B^{\ast}B \leq     \mathfrak{Re}^{2} \!B+  \mathfrak{Im}^{2}B .
\end{equation}
Having analyzed the latter formula, we see that  it is rather reasonable to think over the opportunity of applying the corollary of the  minimax principle pursuing the aim to estimate the singular numbers of the operator $B.$ For the purpose of implementing the latter concept,   consider the following relation
$
\mathfrak{Re}^{2} \!B \,f_{n}=\lambda^{2}_{n} f_{n},
$
where $f_{n},\lambda_{n}$ are the eigenvectors and the eigenvalues of the operator $\mathfrak{Re}   B$ respectively.  Since the operator $\mathfrak{Re}   B$ is selfadjoint and compact then its set of eigenvalues form a basis in $\overline{\mathrm{R}(\mathfrak{Re}   B)}.$   Assume that there exists a non-zero  eigenvalue of the operator $ \mathfrak{Re}^{2} \!B $ that is different from  $\{\lambda^{2}_{n}\}_{1}^{\infty},$  then, in accordance with the well-known fact of the operator theory,  the corresponding eigenvector is orthogonal to the eigenvectors of the operator $\mathfrak{Re}   B.$  Taking into account the fact that the latter form a basis in $\overline{\mathrm{R}(\mathfrak{Re}   B)},$ we come to the conclusion that the eigenvector does not belong to  $\overline{\mathrm{R}(\mathfrak{Re}   B)}.$   Thus, the obtained contradiction proves the fact
$
\lambda_{n}(\mathfrak{Re}^{2} \!B)=\lambda^{2}_{n}(\mathfrak{Re}   B).
$
Implementing the same reasonings, we obtain
$
\lambda_{n}(\mathfrak{Im}^{2} \!B)=\lambda^{2}_{n}(\mathfrak{Im}   B).
$

Further, we   need a result by Ky Fan \cite{firstab_lit:Fan}  see   Corollary  2.2 \cite{firstab_lit:1Gohberg1965} (Chapter II, $\S$ 2.3), in accordance with which, we have
$$
s_{m+n-1}(\mathfrak{Re}^{2} \!B+ \mathfrak{Im}^{2}B)\leq \lambda_{m}(\mathfrak{Re}^{2} \!B)+\lambda_{n}(\mathfrak{Im}^{2}B),\;\;m,n=1,2,...\,.
$$
Choosing $n=m$ and $n=m+1,$  we obtain respectively
$$
s_{2m-1}(\mathfrak{Re}^{2} \!B+ \mathfrak{Im}^{2}B)\leq \lambda_{m}(\mathfrak{Re}^{2} \!B)+\lambda_{m}(\mathfrak{Im}^{2}B),
$$
$$
\, s_{2m}(\mathfrak{Re}^{2} \!B+ \mathfrak{Im}^{2}B)\leq \lambda_{m}(\mathfrak{Re}^{2} \!B)+\lambda_{m+1}(\mathfrak{Im}^{2}B) \;\;m =1,2,...\,.
$$
At this stage of the reasonings we need involve  the sectorial property $\Theta(B)\subset \mathfrak{L}_{0}(\theta)$ which  gives us $|\mathrm{Im}( Bf,f)|\leq \tan \theta \,\mathrm{Re}(   Bf,f).$  Applying the corollary of the minimax principle to the latter relation, we   get $|\lambda_{n}(\mathfrak{Im}   B)|\leq \tan \theta \, \lambda_{n}(\mathfrak{Re}   B).$ Therefore
$$
s_{2m-1}(\mathfrak{Re}^{2} \!B+ \mathfrak{Im}^{2}B)\leq \lambda_{m}(\mathfrak{Re}^{2} \!B)+\lambda_{m}(\mathfrak{Im}^{2}B)\leq\sec^{2}\!\theta \cdot \lambda_{m}^{2}(\mathfrak{Re}  B),
$$
$$
\, s_{2m}(\mathfrak{Re}^{2} \!B+ \mathfrak{Im}^{2}B)\leq \sec^{2}\!\theta  \cdot\lambda_{m}^{2}(\mathfrak{Re}  B) \;\;m =1,2,...\,.
$$
Applying the minimax principle to the formula \eqref{eq4a}, we get
$$
s_{2m-1}(B)\leq \sqrt{2}\sec  \theta \cdot  \lambda_{m} (\mathfrak{Re}  B),\;\;s_{2m}(B)\leq \sqrt{2}\sec  \theta \cdot  \lambda_{m} (\mathfrak{Re}  B), \;\;m =1,2,...\,  .
$$
This   gives us the upper estimate for the singular values of the operator $B.$
\end{proof}

However, to      obtain the lower estimate we need involve Lemma 3.1 \cite[p.336]{firstab_lit:kato1980}, Theorem 3.2 \cite[p.337]{firstab_lit:kato1980}.
 Consider an unbounded   operator $T,\, \Theta(T)\subset \mathfrak{L}_{0}(\theta),$  in accordance with the first representation theorem   \cite[p. 322]{firstab_lit:kato1980}, we can consider its Friedrichs extension the m-sectorial operator $W,$ in its own turn   due to the results \cite[p.337]{firstab_lit:kato1980}, it has a real part $H$ which coincides with the Hermitian  real component if we deal with a bounded operator. Note that by virtue of the sectorial property the operator $H$ is non-negotive. Further, we consider the case $\mathrm{N}(H)=0$ it follows that $\mathrm{N}(H^{\frac{1}{2}})  =0.$ To prove this fact we should note that $\mathrm{def}H=0,$ considering inner product with the element belonging to $\mathrm{N}(H^{\frac{1}{2}})$ we obtain easily the fact  that it must equal to zero.
Having analyzed the proof of Theorem 3.2 \cite[p.337]{firstab_lit:kato1980}, we see that its statement remains true in the modified  form even in the case if we lift the m-accretive condition, thus under the  sectorial condition imposed upon the closed densely defined  operator $T,$ we get the following inclusion
$$
T\subset H^{1/2}(I+iG)H^{1/2},
$$
here   symbol $G$ denotes    a bounded selfadjoint operator in $\mathfrak{H}.$ However, to obtain the asymptotic formula established in Theorem 5 \cite{firstab_lit(arXiv non-self)kukushkin2018} we cannot be satisfied by the made assumptions but require   the existence of the resolvent at the point zero and its compactness. In spite of the fact that   we can  proceed our narrative under the weakened conditions regarding the operator $W$ in comparison with H1,H2, we can   claim that the statement of  Theorem 5 \cite{firstab_lit(arXiv non-self)kukushkin2018} remains true under the assumptions made above, we prefer to deploy H1,H2 what guarantees the conditions we need and at the same time provides a description of the matter under the natural point of view.

\begin{lem}\label{L2} Assume that the hypotheses H1,H2  holds  for the operator $W,$ moreover
$$
 \|\mathfrak{Im}W /\mathfrak{Re}W\|_{2}< 1,
$$
then $$
  \lambda^{-1}_{2n} \left(\mathfrak{Re}W \right)\leq C s_{n}(R_{W}),\;n\in \mathbb{N}.
$$
\end{lem}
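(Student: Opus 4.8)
The plan is to reduce everything to the polar-type representation of the Friedrichs extension and then to play a two-sided minimax game against the positive selfadjoint operator $H=\mathrm{Re}\,W$. First I would fix the representation discussed above: under H1, H2 the operator $W$ is m-sectorial with vertex at zero, so the modified Theorem 3.2 \cite[p.337]{firstab_lit:kato1980} together with Theorem 5 \cite{firstab_lit(arXiv non-self)kukushkin2018} (which also yields the compactness of $R_{W}$) gives the equality $W=H^{1/2}(I+iG)H^{1/2}$ with $G$ a bounded selfadjoint operator. Passing to inverses, $R_{W}=H^{-1/2}(I+iG)^{-1}H^{-1/2}$, and since $W^{\ast}=H^{1/2}(I-iG)H^{1/2}$ also $R_{W}^{\ast}=H^{-1/2}(I-iG)^{-1}H^{-1/2}$. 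Here $G$ is precisely the symmetric quotient $(\mathfrak{Re}\,W)^{-1/2}(\mathfrak{Im}\,W)(\mathfrak{Re}\,W)^{-1/2}$, so the hypothesis $\|\mathfrak{Im}W/\mathfrak{Re}W\|_{2}<1$ amounts to $\|G\|<1$; only the finiteness of $\|G\|$ is actually needed, and it will enter through the constant $C=1+\|G\|^{2}$ below.

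Second I would compute the real component of the resolvent. Since $G$ is selfadjoint the two factors commute and
$$
(I+iG)^{-1}+(I-iG)^{-1}=2(I+G^{2})^{-1},
$$
whence $\mathfrak{Re}\,R_{W}=H^{-1/2}(I+G^{2})^{-1}H^{-1/2}$. From $G^{2}\leq\|G\|^{2}I$ we get the operator inequality $(I+G^{2})^{-1}\geq(1+\|G\|^{2})^{-1}I$, and conjugating by $H^{-1/2}$ yields
$$
\mathfrak{Re}\,R_{W}\geq\frac{1}{1+\|G\|^{2}}\,H^{-1}.
$$
Applying the minimax principle to these two positive compact operators gives, for every index $k$,
$$
\lambda_{k}(\mathfrak{Re}\,R_{W})\geq\frac{1}{1+\|G\|^{2}}\,\lambda^{-1}_{k}(H).
$$
Equivalently one may argue at the form level, writing $\mathfrak{Re}(R_{W}f,f)_{\mathfrak{H}}=\|(I+iG)^{-1}H^{-1/2}f\|^{2}_{\mathfrak{H}}\geq(1+\|G\|^{2})^{-1}(H^{-1}f,f)_{\mathfrak{H}}$.

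Third I would produce the matching upper bound for $\lambda_{2n}(\mathfrak{Re}\,R_{W})$ in terms of $s_{n}(R_{W})$ by the same Ky Fan device used in Lemma \ref{L1}, read in the opposite direction. Discarding the nonnegative term $\mathfrak{Im}^{2}R_{W}$ in the identity $\mathfrak{Re}^{2}R_{W}+\mathfrak{Im}^{2}R_{W}=(R_{W}^{\ast}R_{W}+R_{W}R_{W}^{\ast})/2$ leaves $\mathfrak{Re}^{2}R_{W}\leq(R_{W}^{\ast}R_{W}+R_{W}R_{W}^{\ast})/2$. Applying Corollary 2.2 \cite{firstab_lit:1Gohberg1965} with indices $m=n,\ m'=n+1$ to the sum $R_{W}^{\ast}R_{W}+R_{W}R_{W}^{\ast}$, whose summands share the eigenvalues $s^{2}_{k}(R_{W})$, gives $\lambda_{2n}(R_{W}^{\ast}R_{W}+R_{W}R_{W}^{\ast})\leq s^{2}_{n}(R_{W})+s^{2}_{n+1}(R_{W})\leq 2s^{2}_{n}(R_{W})$, hence $\lambda_{2n}(\mathfrak{Re}^{2}R_{W})\leq s^{2}_{n}(R_{W})$, i.e. $\lambda_{2n}(\mathfrak{Re}\,R_{W})\leq s_{n}(R_{W})$; the positivity of $\mathfrak{Re}\,R_{W}$ justifies $\lambda(\mathfrak{Re}^{2}R_{W})=\lambda^{2}(\mathfrak{Re}\,R_{W})$ exactly as in Lemma \ref{L1}.

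Combining the two estimates at $k=2n$,
$$
\frac{1}{1+\|G\|^{2}}\,\lambda^{-1}_{2n}(H)\leq\lambda_{2n}(\mathfrak{Re}\,R_{W})\leq s_{n}(R_{W}),
$$
so the claim holds with $C=1+\|G\|^{2}$. The main obstacle I anticipate is not the minimax bookkeeping but the rigorous justification of the representation step: because $H$ is unbounded and $R_{W}$ is only sectorial, one must control the domains of $H^{1/2}$ and the boundedness of $G$, which is exactly where H1, H2 and the compactness supplied by Theorem 5 \cite{firstab_lit(arXiv non-self)kukushkin2018} are essential, and one must verify that $\mathfrak{Re}\,R_{W}=H^{-1/2}(I+G^{2})^{-1}H^{-1/2}$ holds as an identity of operators and not merely of forms.
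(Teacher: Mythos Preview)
Your argument is correct and takes a genuinely different, more direct route than the paper. The paper does not work with $R_{W}$ itself but with $W^{2}$: it first proves that $W^{2}$ is sectorial (this is precisely where the Hilbert--Schmidt condition $\|\mathfrak{Im}W/\mathfrak{Re}W\|_{2}<1$ is used), then applies the first representation theorem to $W^{2}$ to obtain an asymptotic equivalence $\lambda_{n}\!\bigl(\mathfrak{Re}\,R_{W}^{2}\bigr)\asymp\lambda_{n}^{-1}\!\bigl(\mathfrak{Re}\,W^{2}\bigr)$ in the spirit of Theorem~5 \cite{firstab_lit(arXiv non-self)kukushkin2018}, next uses the Ky~Fan inequality on $R_{W}^{\ast}R_{W}+R_{W}R_{W}^{\ast}$ exactly as you do, and finally relates $\mathfrak{Re}\,W^{2}$ back to $H^{2}$ via $(\mathfrak{Re}\,W^{2}f,f)\leq\|Hf\|^{2}$. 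You bypass the whole $W^{2}$ layer by computing $\mathfrak{Re}\,R_{W}=H^{-1/2}(I+G^{2})^{-1}H^{-1/2}$ directly from the representation of $W$ and bounding $(I+G^{2})^{-1}$ below by $(1+\|G\|^{2})^{-1}I$; this yields the sharp constant $C=1+\|G\|^{2}$ and, more importantly, shows that the Schatten-$2$ hypothesis is not actually needed for the conclusion --- only the boundedness of $G$, which is automatic from sectoriality ($\|G\|\leq\tan\theta$). One small correction: the hypothesis $\|\mathfrak{Im}W/\mathfrak{Re}W\|_{2}<1$ in the paper is the Hilbert--Schmidt norm of $(\mathfrak{Im}W)H^{-1}=H^{1/2}GH^{-1/2}$, not the operator norm of $G=H^{-1/2}(\mathfrak{Im}W)H^{-1/2}$; your identification of it with $\|G\|<1$ is off, but since your proof never invokes it this does not affect the argument.
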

\begin{proof}
Firstly, let us show that $\mathrm{D}(W^{2})$ is a dense set in $\mathfrak{H}_{+}.$ Since the operator $W$ is closed and strictly accretive, we have $\mathrm{R}(W)=\mathfrak{H},$ hence there exists the preimage of the set $\mathfrak{M},$ let us denote it by $\mathfrak{M}'.$ Consider an arbitrary element $x_{0}\in \mathfrak{H}$ and denote its preimage by $x'_{0},$ we  have
$$
\|W(x'_{0}-x'_{n})\|_{\mathfrak{H}}\geq C\|x'_{0}-x'_{n}\|_{\mathfrak{H}_{+}},
$$
where $\{x_{n}\}_{1}^{\infty}\subset \mathfrak{M}'.$ Hence, the set $\mathfrak{M}'$ is dense in $\mathrm{D}(W)$  in  the sense of the norm $\mathfrak{H}_{+},$ hence it is dense in $\mathfrak{H}_{+}$ and consequently the set  $\mathrm{D}(W^{2})$   is dense  in $\mathfrak{H}_{+}$ since $\mathfrak{M}'\subset \mathrm{D}(W^{2}).$ Here, we should note that we have proved the  fulfilment of the condition H1 for the operator $W^{2}$ with respect to the same pair of Hilbert spaces.

Note that    under the   assumptions H1,H2, using the reasonings of Theorem 3.2 \cite{firstab_lit:kato1980}, we have the following representation
$$
W=H^{1/2}(I+iG)H^{1/2},\;W^{\ast}=H^{1/2}(I-iG)H^{1/2}.
$$
It follows easily from this formula that the Hermitian components of the operator $W$ are defined, we have
$
\mathfrak{Re}W =H,\;\;\mathfrak{Im}W=H^{1/2}GH^{1/2}.
$
Using the decomposition
$
W=\mathfrak{Re}W +i\mathfrak{Im}W,\;W^{\ast}=\mathfrak{Re}W -i\mathfrak{Im}W,
$
we get easily
$$
 \left(  \frac{W^{2}+W^{\ast\,2}}{2}\, f,f\right)=\left\|\mathfrak{Re}W f\right\|^{2}-\left\|\mathfrak{Im}W f\right\|^{2};\,
$$
$$
  \left(  \frac{W^{2}-W^{\ast\,2}}{2i}\, f,f\right) =  (\mathfrak{Im}W \,\mathfrak{Re}Wf,f) +(\mathfrak{Re}W\, \mathfrak{Im}Wf,),\;f\in \mathrm{D}(W^{2}).
$$
Using simple reasonings,  we can rewrite the above formulas in terms of Theorem 3.2 \cite{firstab_lit:kato1980}, we have
\begin{equation}\label{eq1}
\mathrm{Re}(W^{2}f,f)=\|Hf\|^{2}-\|H^{1/2}GH^{1/2}f\|^{2},\;\mathrm{Im} (W^{2}f,f)=\mathrm{Re}(H^{1/2}GH^{1/2}f,H f),
$$
$$
\,f\in \mathrm{D}(W^{2}).
\end{equation}
Consider a set of eigenvalues $\{\lambda_{n}\}_{1}^{\infty}$ and a complete system of orthonormal vectors $\{e_{n}\}_{1}^{\infty}$ of the operator $H,$ using the matrix form of the operator $G,$     we have
$$
\|Hf\|^{2}=\sum\limits_{n=1}^{\infty}  |\lambda _{n}|^{2}|f_{n}|^{2},\;\|H^{1/2}GH^{1/2}f\|^{2}=\sum\limits_{n=1}^{\infty} \lambda_{n} \left| \sum\limits_{k=1}^{\infty} b_{nk}\sqrt{\lambda_{k}}f_{k}\right|^{2},
$$
$$
\mathrm{Re}(H^{1/2}GH^{1/2}f,H^{1/2}f)=\mathrm{Re}\left(\sum\limits_{n=1}^{\infty} \lambda^{3/2}_{n}f_{n}  \sum\limits_{k=1}^{\infty} b_{nk}\sqrt{\lambda_{k}}\bar{f_{k}}\right),
$$
where $b_{nk}$  are the matrix coefficients of the operator $G.$
Applying the Cauchy-Swarcz inequality, we get
$$
 \|H^{1/2}GH^{1/2}f\|^{2}\leq\sum\limits_{n=1}^{\infty} \lambda_{n} \left|\sum\limits_{k=1}^{\infty} |\lambda_{k} f_{k}|^{2} \sum\limits_{k=1}^{\infty}
  |b_{ nk}|^{2} /\lambda_{k}\right|\leq \|Hf\|^{2} \sum\limits_{  n,k=1}^{\infty}|b_{ nk}|^{2}\lambda_{n}/\lambda_{k} ;
$$
$$
 |\mathrm{Re}(H^{1/2}GH^{1/2}f,H f)|\leq \|Hf\| \left(\sum\limits_{n=1}^{\infty}   \left|\sum\limits_{k=1}^{\infty}  \bar{b}_{ nk}   \sqrt{\lambda_{n}\lambda_{k}}f_{k}\right|^{2}
      \right)^{1/2} \leq\|Hf\|^{2} \left(\sum\limits_{ n,k=1}^{\infty}    |b_{nk}|^{2}\lambda_{n}/\lambda_{k} \right)^{1/2} .
$$
In accordance with the definition of the sectorial property, we require
$$
|\mathrm{Im}(W^{2}f,f)|\leq \tan\theta \cdot\mathrm{Re}(W^{2}f,f),\; 0<\theta<\pi/2.
$$
Therefore, the sufficient conditions of the  sectorial property can be expressed as follows
$$
   \|Hf\|^{2}\left(\sum\limits_{ n,k=1}^{\infty}    |b_{nk}|^{2}/\lambda_{k} \right)^{1/2}\!\!\!  \leq   \|Hf\|^{2} \left(1-\sum\limits_{  n,k=1}^{\infty}|b_{ nk}|^{2}\lambda_{n}/\lambda_{k}\right)\tan\theta   ;
 $$
 $$
 \sum\limits_{ k,n=1}^{\infty}|b_{nk}|^{2}\lambda_{n}/\lambda_{k}  +  \mathrm{ctg} \theta\left(\sum\limits_{ k,n=1}^{\infty}   |b_{nk}|^{2}\lambda_{n}/\lambda_{k} \right)^{1/2}\leq 1,
$$
where $\theta$ is the  semi-angle of the supposed sector.
Solving the corresponding quadratic equation, we obtain the desired estimate
\begin{equation}\label{eq6d}
 \left(\sum\limits_{ k,n=1}^{\infty} |b_{nk}|^{2}\lambda_{n}/\lambda_{k}\right)^{1/2}<\frac{1}{2}\left\{\sqrt{  \mathrm{ctg}^{2} \theta  +4}-  \mathrm{ctg} \theta \right\} .
\end{equation}
 Having noticed the fact that    the right hand side of \eqref{eq6d} tends to one from bellow when $\theta$ tends to $\pi/2,$ we obtain the condition of the sectorial property expressed in terms of  the absolute norm
 \begin{equation}\label{eq7c}
  \|H^{1/2}GH^{-1/2}\| _{2}:=\left(\sum\limits_{ k,n=1}^{\infty} |b_{nk}|^{2}\lambda_{n}/\lambda_{k}\right)^{1/2}<1,
 \end{equation}
  in this case, we we can choose the semi-angle of the sector using the following relation
  $$
   \tan\theta = \frac{N}{1-N^{2}}+\varepsilon,\;N:=\|H^{1/2}GH^{-1/2}\| _{2},
  $$
  where $\varepsilon$ is an arbitrary small positive number. Thus, we can resume that in the value of the absolute norm less than one than the operator $W^{2}$ is sectorial and the value of the absolute norm defines the semi-angle.
  Note that coefficients  $  b_{nk} \sqrt{\lambda_{n}/\lambda_{k}},\;\overline{b_{kn}} \sqrt{\lambda_{n}/\lambda_{k}} $ correspond to the matrices of the operators respectively
$$
H^{1/2}GH^{-1/2}f=\sum\limits_{n=1}^{\infty}  \lambda^{1/2}_{n} e_{n}  \sum\limits_{k=1}^{\infty} b_{nk}\lambda^{-1/2}_{k} f_{k},\;H^{-1/2}GH^{1/2}f=\sum\limits_{n=1}^{\infty}  \lambda^{-1/2}_{n} e_{n}  \sum\limits_{k=1}^{\infty} b_{nk}\lambda^{1/2}_{k} f_{k}   .
$$
Thus, if the absolute operator norm exists, i.e.
$$
\|H^{1/2}GH^{-1/2}\| _{2} <\infty,
$$
then both of them belong to the so-called Hilbert-Schmidt class simultaneously, but it is clear without involving the   absolute norm  since the above operators are adjoint.   It is remarkable that,   we can write formally the obtained estimate in terms of the Hermitian components of the operator, i.e.
$$
\|\mathfrak{Im}W /\mathfrak{Re}W\|_{2}< 1.
$$
Bellow, for a convenient form of writing, we will use a short-hand notation $A:=R_{ W },$ where it is necessary. The next step is to establish the asymptotic formula
\begin{equation}\label{eq8c}
\lambda_{n}\left( \frac{A^{2}+A^{ 2\ast}}{2}\right)\asymp \lambda^{-1}_{n} \left(\mathfrak{Re}W^{2} \right),\;n\rightarrow\infty.
\end{equation}
However, we cannot apply directly  Theorem 5 \cite{firstab_lit(arXiv non-self)kukushkin2018} to the operator $W^{2},$  thus we   are compelled to modify the proof having taken into account weaker conditions and the additional condition \eqref{eq7c}.

 Let us observe that the compactness of  the operator $R_{W}(\lambda),\,\lambda \in \mathrm{P}(W)$ gives us the compactness of the operator $W^{-2}.$ Since the latter is sectorial, it follows easily that  $R_{W^{2}}(\lambda),\,\lambda \in \mathrm{P}(W^{2})$ is compact, since the outside of the sector belongs to the resolvent set and the resolvent compact at least at one point is compact everywhere on the resolvent set.  Note that due to the reasonings given above the following relation holds
\begin{equation}\label{7x}
\mathrm{Re}(W^{2}f,f) \geq C \|Hf\|^{2} \geq C\|f\|^{2}_{\mathfrak{H}_{+}},\,f\in \mathrm{D}(W^{2}),
\end{equation}
the latter inequality can be obtained easily (see (28) \cite{firstab_lit(arXiv non-self)kukushkin2018}). Thus, we obtain the fact that the operator $W^{2}$ is sectorial, strictly accretive operator, hence falls in the scope of the first representation theorem in accordance with which there exists one to one correspondence between the closed densely defined sectorial forms and m-sectorial operators. Using this fact, we can claim that the real part $H_{1}:=\mathrm{Re} W^{2}$ is defined and the following relations hold in accordance with the representation theorem i.e., Theorem 3.2 \cite[p.337]{firstab_lit:kato1980}, we get
$$
W^{2}=H_{1}^{1/2}(I+iG_{1})H_{1}^{1/2},\;W^{ 2\ast}=H_{1}^{1/2}(I+iG_{2})H_{1}^{1/2},
$$
where $G_{1},G_{2}$ are selfadjoint bounded operators. Now by direct calculation, we can verify   that $H_{1}=\mathfrak{Re}W^{2},$ we should also note that $\mathrm{D}(W^{2})$ is a core of  the corresponding closed densely defined sectorial form $\mathfrak{h}$ put in correspondence to the operator $H_{1}$ by the first representation theorem, i.e. $\mathrm{D}_{0}(\mathfrak{h})=\mathrm{D}(W^{2}).$
Let us show that $G_{1}=-G_{2}.$ We have
\begin{equation*}
  H_{1}f\! =\!\frac{1}{2}\left[H_{1}^{\frac{1}{2}}(I+i G_{1})  +H_{1}^{\frac{1}{2}}(I+i G_{2})\right]H_{1}^{\frac{1}{2}}\!  =
  $$
  $$
  = \! H_{1} f +
 \frac{i}{2} H_{1}^{\frac{1}{2}}\left(G_{1}+G_{2}\right)  H_{1}^{\frac{1}{2}}f  ,\;f\in \mathfrak{M}'.
\end{equation*}
By virtue of  inequality \eqref{7x}, we see that the operator $H_{1}$ is strictly accretive, therefore
  $\mathrm{N}(H_{1})=0 ;\;( G_{1}+G_{2})H_{1}^{1/2}=0.$ Since
   $$
   \mathfrak{H}=\overline{\mathrm{R}(H_{1}^{1/2})}\oplus \mathrm{N}(H_{1}^{1/2}),
   $$
   then $G_{1}=G_{2}=:G.$ Applying the reasonings represented in Theorem 5 \cite{firstab_lit(arXiv non-self)kukushkin2018}, we obtain the fact that    $H_{1}^{-1/2}$ is a bounded operator  defined on $\mathfrak{H}.$
   Using the  properties  of the operator $G,$    we get
$\|(I+ iG)f\| \cdot\|f\| \geq\mathrm{Re }\left([I+ iG]f,f\right)  =\|f\|^{2} ,\,f\in \mathfrak{H}.$ Hence
$
\|(I+ iG)f\|  \geq \|f\| ,\,f\in \mathfrak{H}.
$
 It implies that the operators  $I+ iG$ are invertible. The reasonings   corresponding to the operator $I-iG$ are absolutely analogous.
   Therefore
  \begin{equation}\label{3.19}
A^{2}=H_{1}^{-\frac{1 }{2}}(I+iG  )^{-1} H_{1}^{- \frac{1}{2}},\;A^{2\ast}=H_{1}^{-\frac{1 }{2}}(I-iG  )^{-1} H_{1}^{- \frac{1}{2}}.
\end{equation}
Using simple calculation based upon the operator properties established above, we get
\begin{equation}\label{3.20}
\mathfrak{Re} A^{2} =\frac{1}{2}\,H_{1}^{-\frac{1 }{2}}  (I+G^{2} )^{-1}  H_{1}^{- \frac{1}{2}}.
\end{equation}
Therefore
 $$
\left(\mathfrak{Re} A^{2}  f,f\right) =\left(H_{1}^{-\frac{1 }{2}} (I+G^{2} )^{-1}     H_{1}^{- \frac{1}{2}}   f,f\right)  \leq
  \|(I+G^{2} )^{-1}      \|\cdot\left(R_{H_{1}}  f,f\right) ,\;f\in \mathfrak{H}.
$$
On the other hand,  it is easy to see that  $ ((I+G^{2} )^{-1}f,f) \geq \|(I+G^{2} )^{-1}f\|^{2} .$ At the same time   it is obvious that   $ I+G^{2} $ is bounded and we have   $\|(I+G^{2} )^{-1}f\| \geq \| I+G^{2}   \|^{-1} \|f\| .$ Using   these estimates, we have
$$
\left(\mathfrak{Re} A^{2}  f,f\right) =\left( (I+G^{2} )^{-1}     H_{1}^{- \frac{1}{2}}   f,H_{1}^{-\frac{1 }{2}}f\right) \geq
\|(I+G^{2} )^{-1}     H_{1}^{- \frac{1}{2}}   f \|^{2} \geq
$$
$$
\geq  \|  I+G^{2}    \|^{-2} \cdot   \left(R_{H_{1} }  f,f\right)_{\mathfrak{H}},\;f\in \mathfrak{H}.
$$
Using relation \eqref{7x}, we obtain the fact that the resolvent  $R_{H_{1}}$ is compact, the fact that $\mathfrak{Re} A^{2}$ is compact is obvious.
 Thus, analogously to the reasonings of  Theorems 5 \cite{firstab_lit(arXiv non-self)kukushkin2018}  applying the minimax principle we obtain the desired asymptotic formula \eqref{eq8c}.
 Further, we will use  the following formula obtained due to the  positiveness of the  squared Hermitian imaginary  component of the operator $A,$ we have
$$
\frac{A^{2}+A^{  2\ast}}{2}=\frac{A^{2}+A^{\ast 2}}{2}\leq A^{\ast}A+AA^{\ast}.
$$
Applying the   corollary of the well-known Allakhverdiyev theorem   (Ky Fan  \cite{firstab_lit:Fan}),   see   Corollary  2.2 \cite{firstab_lit:1Gohberg1965} (Chapter II, $\S$ 2.3),   we have
$$
\lambda_{2n}\left(  A^{\ast}A+AA^{\ast}\right)\leq \lambda_{n}(A^{\ast}A)+\lambda_{n}(AA^{\ast}),\; n\in \mathbb{N}.
$$
Taking into account the fact $s_{n}(A)=s_{n}(A^{\ast}),$ using the minimax principle, we obtain the estimate
$$
s^{2}_{n}(A)\geq C \lambda_{2n}\left( \frac{A+A^{  2\ast}}{2}\right),\;n\in \mathbb{N},
$$
applying \eqref{eq8c}, we obtain
$$
s^{2}_{n}(A)\geq C \lambda^{-1}_{2n} \left(\mathfrak{Re}W^{2} \right),\;n\in \mathbb{N}.
$$
Here, it is rather reasonable to apply formula \eqref{eq1} which  gives us
$$
\|f\|^{2}\leq \|f\|^{2}_{\mathfrak{H}_{+}} \leq\left(\mathfrak{Re}W^{2}f,f\right)\leq\left(H f, Hf\right) ,\;f\in \mathrm{D}(W^{2}),
$$
what in its own turn, collaboratively with the minimax principle leads  us to the theorem statement.
\end{proof}
\begin{remark} It is remarkable that the central point of  the above proof is the representation theorems, in accordance with the first one we have a plain  construction of the operator real part equaling the Hermitian real component. These allow  us to implement the simplified scheme of reasonings represented in  \cite{firstab_lit(arXiv non-self)kukushkin2018}.
\end{remark}
The results given above deserve to be formulated in the following stylistically convenient form
\begin{teo}\label{T1}
Assume that the hypotheses H1,H2 hold  for the operator $W,$ moreover  $$
\|\mathfrak{Im}W /\mathfrak{Re}W\|_{2}< 1,
$$
then
$$
s_{n}(R_{W})\asymp   \lambda^{-1}_{n} \left(\mathfrak{Re} W \right).
$$
\end{teo}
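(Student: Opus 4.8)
The plan is to read the theorem off as the conjunction of the two one-sided estimates already established, with Lemma~\ref{L1} supplying the upper bound and Lemma~\ref{L2} the lower one, after transporting the right-hand side of Lemma~\ref{L1} from the real component of the resolvent to the inverse of the real component. First I would check that Lemma~\ref{L1} applies to $B:=R_{W}$. Under H1, H2 the compact embedding $\mathfrak{H}_{+}\subset\subset\mathfrak{H}$ forces $R_{W}$ to be compact, while the strict accretivity and sectoriality of $W$ make $R_{W}$ sectorial with the vertex at the origin: writing $g=Wf$ one has $(R_{W}g,g)_{\mathfrak{H}}=\overline{(Wf,f)_{\mathfrak{H}}}$, so the numerical range of $R_{W}$ lies in the sector $\mathfrak{L}_{0}(\theta)$ of the same semi-angle $\theta$ as that of $W$ (the vertex of $W$ sitting at a point $\iota>0$ on the real axis only narrows the angular spread seen from the origin, whence $\Theta(W)\subset\mathfrak{L}_{0}(\theta)$, and conjugation preserves this sector).

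Applying Lemma~\ref{L1} to $B=R_{W}$ then yields $s_{n}(R_{W})\leq C\,\lambda_{\lceil n/2\rceil}(\mathfrak{Re}\,R_{W})$ for every $n$. To convert the right-hand side into the quantity appearing in the statement I would invoke the asymptotic equivalence between the real component of the resolvent and the inverse of the real component, namely $\lambda_{n}(\mathfrak{Re}\,R_{W})\asymp\lambda_{n}^{-1}(\mathfrak{Re}\,W)$, established in Theorem~5 \cite{firstab_lit(arXiv non-self)kukushkin2018}; this is legitimate because H1, H2 together with $\|\mathfrak{Im}\,W/\mathfrak{Re}\,W\|_{2}<1$ are exactly the hypotheses under which that equivalence was secured and reused in the proof of Lemma~\ref{L2}. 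This produces the upper half $s_{n}(R_{W})\leq C\,\lambda_{n}^{-1}(\mathfrak{Re}\,W)$.

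The lower half is nothing but the conclusion of Lemma~\ref{L2}, namely $\lambda_{2n}^{-1}(\mathfrak{Re}\,W)\leq C\,s_{n}(R_{W})$. Combining the two one-sided bounds brackets $s_{n}(R_{W})$ between constant multiples of $\lambda_{2n}^{-1}(\mathfrak{Re}\,W)$ from below and $\lambda_{n}^{-1}(\mathfrak{Re}\,W)$ from above, which is precisely the asymptotic equivalence $s_{n}(R_{W})\asymp\lambda_{n}^{-1}(\mathfrak{Re}\,W)$ asserted. Since Lemmas~\ref{L1} and~\ref{L2} have already carried out all the analytic work, at this stage the argument is a pure repackaging, matching the remark that the statement is merely the ``stylistically convenient form'' of the preceding results.

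The step I expect to require the most care is the bookkeeping of the index shifts: Lemma~\ref{L1} halves the index while Lemma~\ref{L2} doubles it, so strictly speaking the two-sided estimate controls $s_{n}(R_{W})$ by $\lambda_{\lceil n/2\rceil}^{-1}(\mathfrak{Re}\,W)$ from above and by $\lambda_{2n}^{-1}(\mathfrak{Re}\,W)$ from below. To collapse this to a genuine same-index equivalence I would rely on the fact that the transition from $\mathfrak{Re}\,R_{W}$ to $(\mathfrak{Re}\,W)^{-1}$ is itself index preserving, and that for the counting functions of regular, power-type growth arising in the intended applications one has $\lambda_{2n}\asymp\lambda_{n}\asymp\lambda_{\lceil n/2\rceil}$, so the dilations are absorbed into the constants implicit in $\asymp$. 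Without such regularity the clean form must be read modulo these harmless index rescalings.
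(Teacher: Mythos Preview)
Your proposal is correct and follows exactly the route the paper intends: the paper's own proof is the single sentence ``The proof follows from Lemmas~\ref{L1},~\ref{L2},'' and you have unpacked precisely that, including the bridge via Theorem~5 of \cite{firstab_lit(arXiv non-self)kukushkin2018} that converts $\lambda_{n}(\mathfrak{Re}\,R_{W})$ into $\lambda_{n}^{-1}(\mathfrak{Re}\,W)$. Your caveat about the index shifts (halving from Lemma~\ref{L1}, doubling from Lemma~\ref{L2}) is well taken and is in fact glossed over in the paper; the clean same-index $\asymp$ is indeed only literal under a mild regularity of the eigenvalue sequence, otherwise it should be read up to bounded index dilations.
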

\begin{proof}
The proof follows from Lemmas \ref{L1},\ref{L2}.
\end{proof}

\section{Mathematical applications}

{\bf The low bound for the Schatten index  of the  perturbed   differential  operator}\\

\noindent{\bf 1.} Trying to show application of Lemma \ref{L1}, we produce  an example  of a non-selfadjoin operator that is not completely subordinated in the sense of forms (see \cite{firstab_lit:Shkalikov A.}, \cite{firstab_lit(arXiv non-self)kukushkin2018}). The pointed out fact means that, we cannot deal with the operator applying methods \cite{firstab_lit:Shkalikov A.} for they do not work.
   Consider a  differential operator acting in the complex Sobolev  space
$$
\mathcal{L}f := (c_{k}f^{(k)})^{(k)} + (c_{k-1}f^{(k-1)})^{(k-1)}+...+  c_{0}f,
$$
$$
\mathrm{D}(\mathcal{L}) = H^{2k}(I)\cap H_{0}^{k}(I),\,k\in \mathbb{N},
$$
where    $I: = (a, b) \subset \mathbb{R},$ the   complex-valued coefficients
$c_{j}(x)\in C^{(j)}(\bar{I})$ satisfy the condition $  {\rm sign} (\mathrm{Re} c_{j}) = (-1)^{j} ,\, j = 1, 2, ..., k.$
 Consider  a linear combination of   the  Riemann-Liouville  fractional differential   operators
  (see \cite[p.44]{firstab_lit:samko1987})   with the  constant  real-valued  coefficients
$$
\mathcal{D}f:=p_{n}D_{a+}^{\alpha_{n}}+q_{n}D_{b-}^{\beta_{n}}+p_{n-1}D_{a+}^{\alpha_{n-1}}+q_{n-1}D_{b-}^{\beta_{n-1}}+...+
p_{0}D_{a+}^{\alpha_{0}}+q_{0}D_{b-}^{\beta_{0}},
$$
$$
\mathrm{D}(\mathcal{D}) = H^{2k}(I)\cap H_{0}^{k}(I),\,n\in \mathbb{N},
$$
where $\alpha_{j},\beta_{j}\geq 0,\,0 \leq [\alpha_{j}],[\beta_{j}] < k,\, j = 0, 1, ..., n.,\;$
\begin{equation*}
 q_{j}\geq0,\;{\rm sign}\,p_{j}= \left\{ \begin{aligned}
  (-1)^{\frac{[\alpha_{j}]+1}{2}},\,[\alpha_{j}]=2m-1,\,m\in \mathbb{N},\\
\!\!\!\!\!\!\! \!\!\!\!(-1)^{\frac{[\alpha_{j}]}{2}},\;\,[\alpha_{j}]=2m,\,\,m\in \mathbb{N}_{0}   .\\
\end{aligned}
\right.
\end{equation*}
The following result is represented in the paper  \cite{firstab_lit(arXiv non-self)kukushkin2018},    consider the operator
$$
G=\mathcal{L}+\mathcal{D},
$$
$$
\mathrm{D}(G)=H^{2k}(I)\cap H_{0}^{k}(I).
$$
It is  clear that it is an operator with a compact resolvent, however for the accuracy  we will prove this fact, moreover we will produce a pair of Hilbert spaces  so that conditions H1,H2 holds. It follows that the resolvent is compact, thus we can observe the problem of calculating Schatten index. Apparently, it may happen that the direct calculation  of the  singular numbers or  their estimation is rather complicated   since we have the following construction,
$$
GG^{\ast}\supset  (\mathcal{L}+\mathcal{D})(\mathcal{L}^{\ast}+\mathcal{D}^{\ast})\supset  \mathcal{L}\mathcal{L}^{\ast}+ \mathcal{D}\mathcal{L}^{\ast}+\mathcal{L}\mathcal{D}^{\ast}+\mathcal{D}\mathcal{D}^{\ast}
$$
where inclusions must satisfy some conditions connected with the core of the operator form for in other case we have a risk to lose some singular numbers. In spite of the fact that    the shown difficulties in many cases  can be  eliminated the offered method of singular numbers estimation becomes apparently relevant.

Let us prove the fulfilment of the conditions H1,H2 under the assumptions  $\mathfrak{H} := L_{2}(I),\, \mathfrak{H}^{+} := H_{0}^{k}(I),\,\mathfrak{M}:=C_{0}^{\infty}(I).$
The fulfillment of the condition H1 is obvious, let us show the fulfilment of the condition  H2. It is easy
to see that
$$
\mathrm{Re}(\mathcal{L}f,f)_{L_{2}(I)}\geq\sum\limits_{j=0}^{k}|\mathrm{Re} c_{j}|\,\|f^{(j)}\|^{2}_{L_{2}(I)}\geq C \|f^{(j)}\|^{2}_{H_{0}^{k}(I)},\;f\in \mathrm{D}(\mathcal{L}).
$$
On the other hand
$$
|(\mathcal{L}f,f)_{L_{2}(I)}|=\left|\sum\limits_{j=0}^{k}(-1)^{j}(c_{j}f^{(j)},g^{(j)} )_{L_{2}(I)}\right|\leq
\sum\limits_{j=0}^{k}\left|(c_{j}f^{(j)},g^{(j)} )_{L_{2}(I)}\right|\leq
$$
$$
\leq C \sum\limits_{j=0}^{k} \|f^{(j)}\| _{L_{2}(I)}\|g^{(j)}\| _{L_{2}(I)}\leq
\|f \| _{H^{k}_{0}(I)}\|g \| _{H^{k}_{0}(I)},\;f\in \mathrm{D}(\mathcal{L}).
$$
Consider the  Riemann-Liouville   operators of fractional differentiation of   arbitrary non-negative
order $\alpha$ (see \cite[p.44]{firstab_lit:samko1987})  defined by the expressions
\begin{equation*}
 D_{a+}^{\alpha}f=\left(\frac{d}{dx}\right)^{[\alpha]+1}\!\!\!\!I_{a+}^{1-\{\alpha\}}f;\;
 D_{b-}^{\alpha}f=\left(-\frac{d}{dx}\right)^{[\alpha]+1}\!\!\!\!I_{b-}^{1-\{\alpha\}}f,
\end{equation*}
where the fractional integrals of      arbitrary positive order  $\alpha$ defined by
$$
\left(I_{a+}^{\alpha}f\right)\!(x)=\frac{1}{\Gamma(\alpha)}\int\limits_{a}^{x}\frac{f(t)}{(x-t)^{1-\alpha}}dt,
 \left(I_{b-}^{\alpha}f\right)\!(x)=\frac{1}{\Gamma(\alpha)}\int\limits_{x}^{b}\frac{f(t)}{(t-x)^{1-\alpha}}dt
, f\in L_{1}(I).
$$
Suppose  $0<\alpha<1,\, f\in AC^{l+1}(\bar{I}),\,f^{(j)}(a)=f^{(j)}(b)=0,\,j=0,1,...,l;$ then the next formulas follows
from   Theorem 2.2 \cite[p.46]{firstab_lit:samko1987}
\begin{equation}\label{4.4}
 D_{a+}^{\alpha+l}f= I_{a+}^{1- \alpha }f^{(l+1)},\;
 D_{b-}^{\alpha+l}f= (-1)^{l+1}I_{b-}^{1- \alpha }f^{(l+1)}.
\end{equation}
  Further, we need  the following inequalities    (see  \cite{firstab_lit:1kukushkin2018})
\begin{equation}\label{4.5}
\mathrm{Re} (D_{a+}^{\alpha}f,f)_{L_{2}(I)}\geq C\|f\|^{2}_{L_{2}(I)},\,f\in I_{a+}^{\alpha}(L_{2}),\;
$$
$$
\mathrm{Re} (D_{b-}^{\alpha}f,f)_{L_{2}(I)}\geq C\|f\|^{2}_{L_{2}(I)},\,f\in I_{b-}^{\alpha}(L_{2}),
\end{equation}
where $I_{a+}^{\alpha}(L_{2}),I_{b-}^{\alpha}(L_{2})$ are the  classes of  the  functions representable by the fractional integrals (see\cite{firstab_lit:samko1987}).
  Consider the following operator  with the  constant  real-valued  coefficients
$$
\mathcal{D}f:=p_{n}D_{a+}^{\alpha_{n}}+q_{n}D_{b-}^{\beta_{n}}+p_{n-1}D_{a+}^{\alpha_{n-1}}+q_{n-1}D_{b-}^{\beta_{n-1}}+...+
p_{0}D_{a+}^{\alpha_{0}}+q_{0}D_{b-}^{\beta_{0}},
$$
$$
\mathrm{D}(\mathcal{D}) = H^{2k}(I)\cap H_{0}^{k}(I),\,n\in \mathbb{N},
$$
where $\alpha_{j},\beta_{j}\geq 0,\,0 \leq [\alpha_{j}],[\beta_{j}] < k,\, j = 0, 1, ..., n.,\;$
\begin{equation*}
 q_{j}\geq0,\;{\rm sign}\,p_{j}= \left\{ \begin{aligned}
  (-1)^{\frac{[\alpha_{j}]+1}{2}},\,[\alpha_{j}]=2m-1,\,m\in \mathbb{N},\\
\!\!\!\!\!\!\! \!\!\!\!(-1)^{\frac{[\alpha_{j}]}{2}},\;\,[\alpha_{j}]=2m,\,\,m\in \mathbb{N}_{0}   .\\
\end{aligned}
\right.
\end{equation*}
Using \eqref{4.4},\eqref{4.5},  we get
$$
(p_{j}D_{a+}^{\alpha_{j}}f,\!f)_{L_{2}(I)}\!=
\!p_{j}\!\left(\!\!\left(\frac{d}{dx}\right)^{\!\!\!m}\!\!D_{a+}^{m-1+\{\alpha_{j}\}}\!\!f,\!f   \!\right)_{\!\!L_{2}(I)}\!\!\!\!\!\! =
(\!-1)^{m}p_{j}\!\left(\! I_{a+}^{1-\{\alpha_{j}\}}\!\!f^{(m)}\!\!,\!f^{(m)}   \!\right)_{\!\!L_{2}(I)}\!\! \geq
$$
$$
\geq C\left\|I_{a+}^{1-\{\alpha_{j}\}}f^{(m)}\right\|^{2}_{L_{2}(I)}=
C\left\|D_{a+}^{\{\alpha_{j}\}}f^{(m-1)}\right\|^{2}_{L_{2}(I)}\geq C \left\| f^{(m-1)}\right\|^{2}_{L_{2}(I)},
$$
 where  $f \in \mathrm{D}(\mathcal{D})$ is    a real-valued function and   $ [\alpha_{j}]=2m-1,\,m\in \mathbb{N}.$
  Similarly,  we obtain for orders  $ [\alpha_{j}]=2m,\,m\in \mathbb{N}_{0}$
$$
(p_{j}D_{a+}^{\alpha_{j}}f,f)_{L_{2}(I)}=p_{j}\left( D_{a+}^{2m +\{\alpha_{j}\}}f,f   \right)_{L_{2}(I)}=(-1)^{m}p_{j}\left( D_{a+}^{m+\{\alpha_{j}\}}f ,f^{(m)}   \right)_{L_{2}(I)}=
$$
$$
=(-1)^{m}p_{j}\left( D_{a+}^{ \{\alpha_{j}\}}f^{(m)} ,f^{(m)}   \right)_{\!L_{2}(I)}\geq C \left\| f^{(m)}\right\|^{2}_{L_{2}(I)}.
$$
Thus in both cases,  we have
$$
(p_{j}D_{a+}^{\alpha_{j}}f,f)_{L_{2}(I)}\geq C \left\| f^{(s)}\right\|^{2}_{L_{2}(I)},\;s= \big[[\alpha_{j}]/2\big] .
$$
 In the same way, we obtain the inequality
$$
(q_{j}D_{b-}^{\alpha_{j}}f,f)_{L_{2}(I)}\geq C \left\| f^{(s)}\right\|^{2}_{L_{2}(I)},\;s= \big[[\alpha_{j}]/2\big] .
$$
  Hence in the
complex case we have
$$
\mathrm{Re}(\mathcal{D}f,f)_{L_{2}(I)}\geq C \left\| f \right\|^{2}_{L_{2}(I)},\;f\in \mathrm{D}(\mathcal{D}).
$$
Combining   Theorem 2.6 \cite[p.53]{firstab_lit:samko1987}  with  \eqref{4.4}, we get
$$
\left\| p_{j}D_{a+}^{\alpha_{j}}f \right\| _{L_{2}(I)}=  \left\|  I_{a+}^{1-\{\alpha_{j}\}}f^{([\alpha_{j}]+1)} \right\| _{L_{2}(I)}
 \leq C   \left\|   f^{([\alpha_{j}]+1)} \right\|_{L_{2}(I)}\leq C   \left\|   f  \right\|_{H^{k}_{0}(I)};
 $$
 $$
 \;\left\|q_{j}D_{b-}^{\alpha_{j}}f \right\| _{L_{2}(I)}
\leq  C   \left\|   f  \right\|_{H^{k}_{0}(I)},\;f\in \mathrm{D}(\mathcal{D}).
$$
 Hence, we obtain
$$
\left\| \mathcal{D}f \right\| _{L_{2}(I)}\leq C \left\|  f \right\|_{H^{k}_{0}(I)},\;f\in \mathrm{D}(\mathcal{D}).
$$
Taking into account the relation
$$
\left\|f\right\| _{L_{2}(I)}\leq C \left\|f\right\| _{H^{k}_{0}(I)},\,f\in H^{k}_{0}(I),
$$
Combining the above estimates, we get
$$
\mathrm{Re} (Gf,f)_{L_{2}(I)}\geq C\|f\|^{2}_{H^{k}_{0}(I)},\; |(Gf,g)_{L_{2}(I)}|\leq \|f \| _{H^{k}_{0}(I)}\|g \| _{H^{k}_{0}(I)},\,f,g\in C^{\infty}_{0}(I).
$$
Thus, we have obtained the desired result.

To deploy  the  minimax principle for  eigenvalues estimating, we come to the following relation
$$
C_{1}\|f \|^{2} _{H^{k}_{0}(I)}\leq(\mathfrak{Re} G f,f)_{L_{2}(I)}\leq C_{2}\|f \|^{2} _{H^{k}_{0}(I)},
$$
from what follows  easily, due to the asymptotic formulas for  the selfadjoint operators  eigenvalues (see \cite{firstab_lit:Rosenblum}), the fact
$$
\lambda_{n}(\mathfrak{Re} G)\asymp n^{2k},\;n\in  \mathbb{N},
$$
therefore applying Lemma \ref{L1} collaboratively with the asymptotic equivalence formula (see Theorem 5 \cite{firstab_lit(arXiv non-self)kukushkin2018})
$$
\lambda^{-1}_{n}(\mathfrak{Re} G)\asymp \lambda_{n}(\mathfrak{Re}R_{ G}),\;n\in  \mathbb{N},
$$
 we obtain the fact
$$
R_{G}\in \mathfrak{S}_{p},\,\inf p\leq 1/2k.
$$
Thus, it gives us an opportunity to establish the range of  the Schatten index.\\

\noindent{\bf 2.} Let us show the application of Lemma \ref{L2}, firstly consider the following reasonings
$$
\| \mathfrak{Im}W H^{-1} \|_{2}=\| H^{-1}\mathfrak{Im}W\|_{2}=\sum\limits_{n,k=1}^{\infty}|(\mathfrak{Im} We_{k},H^{-1}e_{n})|^{2}=
\sum\limits_{n,k=1}^{\infty}\lambda^{-2}_{n}|(e_{k}, \mathfrak{Im} W e_{n})|^{2}=
$$
$$
=\sum\limits_{n=1}^{\infty}\lambda^{-2}_{n}||  \mathfrak{Im} W e_{n} ||^{2},
$$
where $\{e_{n}\}_{1}^{\infty}$ is the orthonormal  set of the eigenvectors of the operator $H.$ Thus, we obtain the following condition
\begin{equation}\label{12x}
\sum\limits_{n=1}^{\infty}\lambda^{-2}_{n}||  \mathfrak{Im} W e_{n} ||^{2}<1,
\end{equation}
which guaranties the fulfilment of the additional   condition regarding  the absolute norm  in Lemma \ref{L2}. It is remarkable that this form of the condition is quiet convenient if we consider perturbations of differential operators.
   Bellow we observe  a simplified case of the operator considered in the previous paragraph. Consider
  $$
 Lf := - f''  + \xi D_{0+}^{\alpha }f ,\;
\mathrm{D}(L) = H^{2 }(I)\cap H_{0}^{1}(I) ,\;I=(0,\pi),\,\alpha\in (0,1/2),
$$
then
$$
C_{0}( L_{1}f,f)\leq(\mathfrak{Re}Lf,f)\leq C_{1}( L_{1}f,f),\; L_{1}f:=-f'',\;\mathrm{D}(L_{1})=\mathrm{D}(L).
$$
It is well-known fact that
$$
\lambda_{n}(L_{1})=n^{2},\;e_{n}=\cos nx.
$$
It is also clear that
$$
\mathfrak{Im }L \supset \xi(D_{0+}^{\alpha }-D_{\pi-}^{\alpha })/2i.
$$
In accordance with the first representation theorem $H^{2 }(I)\cap H_{0}^{1}(I)$  is a core of the form corresponding to the operator $L^{\ast},$ hence
$$
\mathfrak{Im }L = \xi(D_{0+}^{\alpha }-D_{\pi-}^{\alpha })/2i.
$$
Note that
$$
\left(D_{0+}^{\alpha }e_{n}\right)(x)=\frac{n}{\Gamma(1-\alpha)}\int\limits_{0}^{x}(x-t)^{-\alpha}\cos n t\, dt
$$
Applying the generalized Minkovskii inequality, we get
$$
\left(\int\limits_{0}^{\pi}\left|(D_{a+}^{\alpha }e_{n})(x)\right|^{2} dx\right)^{1/2}= \frac{n }{\Gamma(1-\alpha)}\left(\int\limits_{0}^{\pi}\left|\int\limits_{0}^{x}(x-t)^{ -\alpha}\cos n t\, dt\right|^{2}\right)^{1/2}\leq
$$
$$
\leq  \frac{n }{\Gamma(1-\alpha)}\int\limits_{0}^{\pi}\cos n t\, dt \left(\int\limits_{t}^{\pi}(x-t)^{ -2\alpha}d x \right)^{1/2}=
 \frac{n }{\sqrt{(1-2\alpha)}\Gamma(1-\alpha)}\int\limits_{0}^{\pi}(\pi-t)^{1/2- \alpha}\cos n t\, dt\leq
$$
$$
\leq
 \frac{n \pi^{1/2-\alpha}}{\sqrt{(1-2\alpha)}\Gamma(1-\alpha)}.
$$

Analogously, we obtain
$$
\left(\int\limits_{0}^{\pi}\left|(D_{\pi-}^{\alpha }e_{n})(x)\right|^{2} dx\right)^{1/2}\leq  \frac{n \pi^{1/2-\alpha}}{\sqrt{(1-2\alpha)}\Gamma(1-\alpha)}.
$$
Hence
$$
\|\mathfrak{Im }L e_{n}\|\leq \frac{n \xi\pi^{1/2-\alpha}}{\sqrt{(1-2\alpha)}\Gamma(1-\alpha)}.
$$
Therefore
$$
\sum\limits_{n=1}^{\infty}\lambda^{-2}_{n}(\mathfrak{Re}L)||  \mathfrak{Im} Le_{n} ||^{2}< \frac{  \xi^{2}\pi^{1 -2\alpha}}{ (1-2\alpha) \Gamma^{2}(1-\alpha)}\sum\limits_{n=1}^{\infty}\frac{1}{n^{2}}=\frac{  \xi^{2}\pi^{3 -2\alpha } }{6 (1-2\alpha) \Gamma^{2}(1-\alpha)}.
$$
Using this relation we can obviously impose a condition on $\xi$ that guarantees the fulfilment of the relation \eqref{12x}, i.e.
$$
 \xi  <\frac{    \sqrt{6 (1-2\alpha)} \Gamma (1-\alpha)   }{ \pi^{3/2 - \alpha }}.
$$
In accordance with Theorem \ref{T1}, the latter condition follows that
$$
s_{n}^{-1}(R_{L})\asymp n^{2},\;R_{L}\in \mathfrak{S}_{p},\,\inf p=1/2.
$$

\noindent{\bf Existence and uniqueness theorems for     Evolution equations via obtained results }\\

     We  consider   applications  to the differential equations in the concrete Hilbert spaces and involve such operators as Riemann-Liouville operator, Kipriyanov operator,    Riesz potential, difference operator.  Moreover, we produce  the artificially constructed normal operator for which the clarification of the Lidskii   results  relevantly works.
     Further, we will consider a Hilbert space $\mathfrak{H}$ which consists of   element-functions $u:\mathbb{R}_{+}\rightarrow \mathfrak{H},\,u:=u(t),\,t\geq0$    and we will assume that if $u$ belongs to $\mathfrak{H}$    then the fact  holds for all values of the variable $t.$ Notice that under such an assumption all standard topological  properties as completeness, compactness   etc.  remain correctly defined. We understand such operations as differentiation and integration in the generalized sense that is caused by the topology of the Hilbert space $\mathfrak{H},$  more detailed information can be found in the   Chapter 4   Krasnoselskii M.A.   Consider an arbitrary compact operator $B,$ we can form  the  operators  corresponding to the groups of eigenvalues i.e.
$$
\mathcal{P}_{\nu}(B,\alpha,t) \Leftrightarrow \lambda_{N_{\nu}+1},\lambda_{N_{\nu}+2},...,\lambda_{N_{\nu+1}},
$$
where $\{N_{\nu}\}_{0}^{\infty}$ is a sequence of natural numbers,
$$
\mathcal{P}_{\nu}(B,\alpha,t)= \frac{1}{2 \pi i}\int\limits_{\vartheta_{\nu}(B)}e^{- \lambda^{\alpha}  t}B(I-\lambda B)^{-1} d\lambda,\,\alpha>0,
$$
  $\vartheta_{\nu}(B)$ is a contour on the complex plain containing only the  eigenvalues $\lambda_{N_{\nu}+1},\lambda_{N_{\nu}+2},...,\lambda_{N_{\nu+1}}$ and no  more eigenvalues.

  The root vectors of the operator $B$ is called by the Abell-Lidskii basis if
 $$
  \sum_{\nu=0}^{\infty}\mathcal{P}_{\nu}(B,\alpha,t)\rightarrow I,\,t\rightarrow 0,
$$
where convergence is understood as   operator pointwise convergence   in the Hilbert space.  We can compare this definition  with the unit decomposition - the main principle of the spectral theorem.
We put the following contour   in correspondence to the operator
\begin{equation*}
\vartheta(B):=\left\{\lambda:\;|\lambda|=r>0,\,|\mathrm{arg} \lambda|\leq \theta+\varepsilon\right\}
 \cup\left\{\lambda:\;|\lambda|>r,\; |\mathrm{arg} \lambda|=\theta+\varepsilon\right\}.
\end{equation*}
 Consider the following hypotheses \\

\noindent  {\bf(S1)}  {\it Under the assumptions $B\in \mathfrak{S} _{p},\,\inf p\leq\alpha,\,\Theta(B) \subset   \mathfrak{L}_{0}(\theta),$  a sequence of natural numbers $\{N_{\nu}\}_{0}^{\infty}$ can be chosen so that
 \begin{equation*}\label{eq6a}
 \frac{1}{2\pi i}\int\limits_{\vartheta(B)}e^{-\lambda^{\alpha}t}B(I-\lambda B)^{-1}f d \lambda =\sum\limits_{\nu=0}^{\infty} \mathcal{P}_{\nu}(B,\alpha,t)f,
 \end{equation*}
 the latter series is absolutely convergent in the sense of the norm.}

  Combining the generalized integro-differential    operations, we can consider a  fractional differential operator
 in the Riemann-Liouvile sense,   i.e. in the formal form, we have
$$
   \mathfrak{D}^{1/\alpha}_{-}f(t):=-\frac{1}{\Gamma(1-1/\alpha)}\frac{d}{d t}\int\limits_{0}^{\infty}f(t+x)x^{-1/\alpha}dx,\;\alpha>1.
$$
Let us study   a Cauchy problem
\begin{equation}\label{17n}
   \mathfrak{D}^{1/\alpha}_{-}  u=  W  u ,\;u(0)=h\in \mathrm{D}(W ).
\end{equation}
Note that it is possible to apply  the Abell-Lidskii concept using the methods  \cite{firstab_lit:1Lidskii}, \cite{kukushkin2019}, \cite{kukushkin2021a}, \cite{firstab_lit:1kukushkin2021}, \cite{firstab_lit:2kukushkin2022}, \cite{firstab_lit(axi2022)} in the   case $R_{W}\in \mathfrak{S}_{p},\, \inf p \leq\alpha.$  We can resume the central result of the listed above papers is to find conditions under which the hypotheses  S1  holds.\\

{\it The latter follows that
 there exists a solution of the Cauchy problem \eqref{17n} in the form
$$
u(t)=\sum\limits_{\nu=0}^{\infty} \mathcal{P}_{\nu}(B,\alpha,t)h.
$$
Apparently, under this point of view the results of the paper become relevant since we can find the exact value of the Schatten index in accordance with Theorem \ref{T1}.}\\

To demonstrate the claimed result we produce an example dealing with   well-known operators. Consider a rectangular domain in the space $\mathbb{R}^{n},\;$ defined as follows $\Omega:=\{x_{j}\in [0,\pi],\,j=1,2,...,n\}$  and
consider the  Kipriyanov  fractional differential operator     defined in  the paper \cite{firstab_lit:1kipriyanov1960}  by  the formal expression
\begin{equation*}
\mathfrak{D}^{\alpha}f(Q)=\frac{\alpha}{\Gamma(1-\alpha)}\int\limits_{0}^{r} \frac{[f(Q)-f(T)]}{(r - t)^{\alpha+1}} \left(\frac{t}{r} \right) ^{n-1} dt+
(n-1)!f(Q)r ^ {-\alpha} /\Gamma(n-\alpha),\, P\in\partial\Omega,
\end{equation*}
where $Q:=P+\mathbf{e} r,\;P:=P+\mathbf{e}t,\;\mathbf{e}$ is a unit vector having a direction from the fixed point of the boundary $P$ to an arbitrary point $Q$ belonging to $\Omega.$

  Consider the perturbation of the Laplace operator by the Kipriyanov operator
$$
L :=  -D^{2k} + \xi \mathfrak{D}^{\beta} ,\,\mathrm{D}(L)= H_{0}^{k}(\Omega)\cap H ^{2k}(\Omega),
$$
where  $\beta\in (0,1),$
$$
D^{2k}f=\sum\limits_{j=1}^{n} \mathcal{D}_{j}^{2k}f
$$
I was proved in the paper \cite{kukushkin2021a} that
 $$
-C_{0} (D^{2k} f,f)\leq(\mathfrak{Re}Lf,f)\leq -C_{1} (D^{2k} f,f),\;f\in \mathrm{D}(L).
$$
Therefore
$$
\lambda_{n}(\mathfrak{Re} L) \asymp n^{2k/n}.
$$
On the other hand, we have the following eigenfunctions of $D^{2k}$ in the rectangular
$$
e_{\bar{l}} =\prod\limits_{j=1}^{n}\sin l_{j}x_{j},\;\bar{l}:=\{l_{1},l_{2},...,\,l_{ n }\},\,l_{s}\in \mathbb{N},\,s=1,2,...,n.
$$
It is clear that
$$
\;- D^{2k} e_{\bar{l}} =\lambda_{\bar{l}}\,e_{\bar{l}},\;\lambda_{\bar{l}} =\sum\limits_{j=1}^{n}l^{2k}_{j}.
$$
Let us show that the system
$
\left\{e_{\bar{l}} \right\}
$
is complete in the Hilbert space  $L_{2}(\Omega),$ we will show it if we prove that the element that is orthogonal to every element of the system is a zero.
Assume that
$$
 \int\limits_{0}^{\pi}\sin l_{1}x_{1}dx_{1}\int\limits_{0}^{\pi} \sin l_{2}dx_{2}...\int\limits_{0}^{\pi} \sin l_{n}f(x_{1},x_{2},...,x_{n})dx_{n}=  (e_{\bar{l}},f)_{L_{2}(\Omega)}=0.
$$
In accordance with the fact that the system $\{\sin l x\}_{1}^{\infty}$ is a compleat system in $L_{2}(0,\pi),$ we conclude that
$$
 \int\limits_{0}^{\pi} \sin l_{2}dx_{2}...\int\limits_{0}^{\pi} \sin l_{n}f(x_{1},x_{2},...,x_{n})dx_{n} =0.
$$
Having repeated the same reasonings step by step, we obtain the desired result. Taking into account the following   inequality and the embedding theorems, we get
$$
\|\mathfrak{D}^{\beta}f\|\leq C_{\beta}\|f\|_{H_{0}^{1}(\Omega)}\leq C_{\beta,k,n} \|f\|_{H_{0}^{k}(\Omega)},
$$
where
$$
C_{\beta}= \frac{2}{ \beta } \cdot  \|\mathfrak{I}^{1}_{d-} \|  + (1-\beta)^{-1},
$$
We, have
$$
\sum\limits_{l_{1},l_{2},...l_{n}=1}^{\infty} \lambda^{-2}_{\bar{l}}(\mathfrak{Re}L) \|\mathfrak{Im} L e_{\bar{l}}\|^{2}  \leq \xi
C_{\alpha,k,n}\sum\limits_{l_{1},l_{2},...l_{n}=1}^{\infty}\frac{\lambda _{\bar{l}}(  D^{2 })}{\lambda^{ 2}_{\bar{l}}(  D^{2k})}.
$$
Therefore, if the following condition holds
\begin{equation}\label{x14}
\xi
C_{\alpha,k,n}\leq \sum\limits_{l_{1},l_{2},...l_{n}=1}^{\infty}\frac{l^{2}_{1}+l^{2}_{2}+...+l^{2}_{n} }{(l^{2k}_{1}+l^{2k}_{2}+...+l^{2k}_{n})^{2}}<\infty,
\end{equation}
then the conditions of Lemma \ref{L2} are satisfied. Consider the values of the parameters $k,n$ such that the last series is convergent and at the same time $R_{L}\in \mathfrak{S}_{p},\,\inf p=n/2k>1.$ The latter gives us the argument showing   relevance of Lemma \ref{L2} since we can find the range regarding $\alpha$ of the Abell-Lidskii method applicability.

Assume that the following  condition holds
$$
\frac{n}{2}+1<2k<n .
$$
Consider the function
$$
\psi(\bar{l})=\frac{(l^{2k}_{1}+l^{2k}_{2}+...+l^{2k}_{n})^{2}   }{ l^{2}_{1}+l^{2}_{2}+...+l^{2}_{n} },
$$
then $\psi(\bar{t})=nt^{2(2k-1)},\,\bar{t}=\{t,t,...t\}.$ It is clear that the number $s$ of values   $\psi(\bar{l}),\,l_{i}\leq t$ equals to $t^{n},$ i.e. $s=t^{n}.$ Therefore
$$
\psi(\bar{t})=ns^{\frac{2(2k-1)}{n}},\;\psi( \overline{t-1} )=n(s^{1/n}-1)^{2(2k-1) };
$$
$$
n(s^{1/n}-1)^{2(2k-1) } \leq\psi(\bar{l})\leq ns^{\frac{2(2k-1)}{n}},\; t-1\leq l_{i}\leq t,\;i=1,2,...,n.
$$
Having arranged the values in the order corresponding to their absolute value  increasing, we get
$$
 n(s^{1/n}-1)^{2(2k-1) }  \leq\psi_{j} \leq ns^{\frac{2(2k-1)}{n}},\;(s^{1/n}-1)^{n}<j< s,
$$
Therefore
$$
\frac{(s^{1/n}-1)^{2(2k-1) }}{s^{\frac{2(2k-1)}{n}}}<\frac{\psi_{j} }{nj^{\frac{2(2k-1)}{n}}}<\frac{ s^{\frac{2(2k-1)}{n}}}{(s^{1/n}-1)^{2(2k-1) } },
$$
from what follows the convergence of the following series if we take into account the condition  $n/2+1<2k,$ we have
$$
\sum\limits_{j=1}^{\infty} \psi^{-1}_{j}<\infty.
$$
In other words we have proved that the series \eqref{x14} is convergent. Thus, we have considered the case showing relevance of Lemma \ref{L2}. We can claim that the Abell -Lidskii method is not applicable for the values of $\alpha$ less than $n/2k.$

\section{Conclusions}

\end{document}